\newcommand{\R}{\mathds{R}}
\newcommand{\N}{\mathds{N}}
\newcommand{\cE}{\mathcal{E}}
\newcommand{\cL}{\mathcal{L}}
\newcommand{\cM}{\mathcal{M}}
\newcommand{\cV}{\mathcal{V}}
\newcommand{\1}{\mathds{1}}
\newcommand{\epsi}{\ensuremath{\varepsilon}}
\newcommand{\vect}[3]{\ensuremath{#1_{#2},\ldots,#1_{#3}}}
\renewcommand{\rho}{\varrho}
\newcommand{\de}{\mathrm{d}}
\newcommand{\eqd}{\stackrel{d}{=}}
\newcommand{\eqn}{\begin{equation}}
\newcommand{\nqe}{\end{equation}}
\newcommand{\pmx}{\begin{pmatrix}}
\newcommand{\xmp}{\end{pmatrix}}
\newcommand{\bmx}{\begin{bmatrix}}
\newcommand{\xmb}{\end{bmatrix}}
\DeclareMathOperator{\Var}{Var}
\theoremstyle{definition}
\newtheorem{definition}{Definition}[section]
\theoremstyle{plain}
\newtheorem{satz}[definition]{Theorem}
\newtheorem{lemma}[definition]{Lemma}
\newtheorem{korollar}[definition]{Corollary}
\theoremstyle{remark}
\newtheorem{bem}[definition]{Remark}
\newcommand{\qed}{~\hfill$\Box$}
\newcommand{\BOX}{\ensuremath\Box}
\renewenvironment{proof}[1]{{\vskip\baselineskip\noindent\textbf{Proof#1.}}}%
{\origqed\vskip\baselineskip\gdef\origqed{\hspace*{.1pt}\hspace*{\fill}\BOX}}
\def\@tagformdelimstart{(}%
\def\@tagformdelimend{)}%
\def\@tagformdel{%
  \gdef\@tagformdelimstart{}%
  \gdef\@tagformdelimend{}%
}
\def\@tagformset{%
  \gdef\@tagformdelimstart{(}%
  \gdef\@tagformdelimend{)}%
}
\def\tagform@#1{%
   \maketag@@@{\@tagformdelimstart\ignorespaces#1\unskip%
   \@@italiccorr\@tagformdelimend}\@tagformset}
\def\origqed{\hspace*{.1pt}\hspace*{\fill}\BOX}
\def\qed{\ifmmode%
 \@tagformdel%
 \tag{\BOX}%
 \else%
 \hspace*{.1pt}\hspace*{\fill}\BOX%
 \fi%
 \gdef\origqed{}}
\title{\bfseries \boldmath %
On the asymptotic internal path length and the asymptotic Wiener index of random split trees}
\author{G\"otz Olaf Munsonius\\[1.2ex]
J.W. Goethe University\\
Institute of Mathematics\\
60054 Frankfurt a.M., Germany\\
munsonius@math.uni-frankfurt.de\\[2ex]
}
\begin{document}

\maketitle

\begin{abstract}
The random split tree introduced by \citet{devroye_99} is considered.
We derive a second order expansion for the mean of its internal path length and furthermore obtain a limit law by the contraction method.
As an assumption we need the splitter having a Lebesgue density and mass in every neighborhood of $1$.
We use properly stopped homogeneous Markov chains, for which limit results in total variation distance as well as renewal theory are used.
Furthermore, we extend this method to obtain the corresponding results for the Wiener index.
\end{abstract}

\noindent%
\small\textbf{Key words:} random trees, probabilistic analysis of algorithms, internal path length, Wiener index\\[2ex]

\small\textbf{AMS 2000 Subject Classification:} 60F05; 68P05; 05C05\\[2ex]
\normalsize
\section{Introduction}\label{sec-1}
The random split tree introduced by \citet{devroye_99} is a general tree model which for special choices of its parameters covers various random trees that are fundamental in Computer Science for their use as data structures, e.g.\ binary search trees, quadtrees, $m$-ary search trees, simplex trees, tries etc.
Many characteristic quantities of these trees such as node depths, height, path length or other distance measures between nodes describe the complexity of algorithms that make use of the trees.
In the probabilistic analysis of algorithms the asymptotic behavior of such quantities is studied for this reason.
Whereas often such characteristic quantities are studied one by one for each tree Devroye's idea was to derive universal results valid for the whole class of his split tree model.

We recall the definition of the split tree from \citet{devroye_99}.
Four parameters $b,s,s_0,s_1\in\N_0$ are given where $b\ge 2$ is the branching factor, $s>0$ is the vertex capacity and $s_0$ and $s_1$ satisfy the two conditions
\[
0\le s_0\le s,\qquad 0\le bs_1\le s+1-s_0.
\]
Furthermore, a random vector $\cV=(V_1,\ldots,V_b)\in[0,1]^b$ with $\sum_{k=1}^bV_k=1$ is given.
The random split tree of size $n$ is obtained by distributing $n$ balls to the nodes of the infinite $b$-ary tree according to the following procedure.
For a node $u$ of the $b$-ary tree let $C(u)$ denote the number of balls already assigned to this node and $N(u)$ be the number of balls associated to any node in the subtree rooted at this node.
For each node $u$ take an independent copy $\cV^{(u)}=(V^{(u)}_1,\ldots,V^{(u)}_b)$ of the random vector $\cV$.
Initially, there are no balls (i.e.\ $C(u)=0$ for all $u$) distributed.
The balls are added to the tree sequentially.
Adding a ball to a tree rooted at $u$ proceeds as follows:
\begin{enumerate}
\item If $u$ is not a leaf (i.e.\ $C(u)<N(u)$), choose child $i$ with probability $V^{(u)}_i$, increment $N(u)$ by $1$ and recursively add the ball to the subtree rooted at child $i$.
\item If $u$ is a leaf and $C(u)=N(u)<s$, then add the ball to $u$ and stop.
$C(u)$ and $N(u)$ are incremented by $1$.
\item If $u$ is a leaf but $C(u)=N(u)=s$, we set $N(u)=s+1$ and $C(u)=s_0$, place $s_0\le s$ randomly selected balls at $u$, give $s_1$ randomly selected balls to each of the $b$ children of $u$ and set $C(v)=s_1=N(v)$ for all children $v$ of $u$.
After that, we add each of the remaining $s+1-s_0-bs_1\ge 0$ balls one by one randomly and independently to the subtree rooted at child $i$ with probability $V^{(u)}_i$ by applying the procedure recursively.
\end{enumerate}

Usually, one assumes that $V_i\eqd V_1=:V$ for all $i=2,\ldots,b$ where $V$ is called the splitter and its distribution is called the splitting distribution.
By $\eqd$ it is denoted that left and right hand side have identical distributions.
Whenever the functional under consideration is independent of the tree ordering, this assumption does not mean any loss of generality.
This can be seen by a random permutation argument, already stated in \citet{devroye_99}.
In this paper we need some additional assumption:\\[1.5ex]
\textbf{General assumption:} Throughout this paper we assume that the distribution of $V$ has a Lebesgue density $f_V$ and that for the distribution function we have $F_V(x)<1$ for all $x<1$.\\[1.5ex]
As mentioned in the beginning, the random split tree models many common random trees.
For instance, choosing $s=s_0=b-1$ for some $b\ge 2$, $s_1=0$ and $V=\min\{U_1,\ldots,U_{b-1}\}$ where $U_1,\ldots,U_{b-1}$ are independent random variables uniformly distributed on $[0,1]$ one gets the random $b$-ary search tree.
The random median-of-$(2k+1)$ binary search tree can be realized by setting $b=2$, $s=2k$, $s_0=1$, $s_1=k$ and $V=\mathrm{median}(U_1,\ldots,U_{2k+1})$.
Also some digital data structures are covered by the split tree model.
For $V$ uniformly distributed on the deterministic set $\{p_1,\ldots,p_b\}$, $s=1$ and $s_1=0$ one obtains in the case $s_0=0$ the trie and in the case $s_0=1$ the digital search tree.
In Table 1 in \citet{devroye_99} more examples of important tree models are listed with the corresponding choices of the parameters.

The general assumption and with it the results of this paper hold true for many of these examples as random binary search trees, random $b$-ary search trees, random quadtrees, random median-of-$(2k+1)$ binary search trees, random simplex trees, (extended) AB trees and random $m$-grid trees.
Whereas the results are not applicable to the common digital data structures as tries and digital search trees.\\[1ex]

The depth of the $n$-th ball in a random split tree, denoted by $D_n$, is the number of edges on the path from the ball to the root of the tree.
The internal path length of balls in the split tree is the sum of all depths of balls and is denoted by $P_n$ for the tree with $n$ balls.
Thus, we have
\[
P_n=\sum_{k=1}^nD_k.
\]

The asymptotic expansion of the expectation of $P_n$ was investigated for $m$-ary search trees in \citet{mahmoud_86}, for random quadtrees by \citet{flajolet_labelle_laforest_salvy_95} and for the median of $(2k+1)$-binary search tree by \citet{chern_hwang_01} and \citet{roesler_01}.
In \citet{holmgren_10} the internal path length of random split trees is considered under the assumption that the splitting distribution is non-lattice.
The first term and an upper bound of the second term of the asymptotic mean are derived using renewal theory.

Limit theorems for the distribution of the path length are proved for the random binary search tree in \citet{regnier_89} and \citet{roesler_91} and for the random recursive tree in \citet{dobrow_fill_99}.

Using the contraction method, \citet[Theorem 5.1]{neininger_rueschendorf_99} showed a universal limit theorem for the internal path length of random split trees under the assumption that the asymptotic expansion of the expectation of the internal path length is of the form
\eqn\label{as_form}
E[P_n]=d_1n\log n+d_2n+o(n)
\nqe
as $n\to\infty$.
Therefore, it is of interest to characterize all splitting distributions providing an asymptotic expectation of the form \eqref{as_form}.
The first result of this paper is the following.
\begin{satz}\label{th}
Let $P_n$ denote the internal path length in a random split tree of size $n$ with branching factor $b$ where the one-dimensional marginal distribution $V$ of the splitting vector fulfills the general assumption.
Then there exists a constant $c_p\in\R$ with
\[
 E[P_n]=\frac 1{\mu}n\log n+c_pn+o(n)
\]
as $n\to\infty$ where $\mu=-bE[V\log V]$.
\end{satz}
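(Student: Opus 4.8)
\emph{Strategy.} I would begin from the deterministic identity
\[
P_n \;=\; \sum_{u\ne\varnothing} N(u) \;=\; \sum_{j\ge 1}\#\{\text{balls at depth}\ \ge j\},
\]
where $N(u)$ is the number of balls in the subtree rooted at $u$ and the first sum runs over all non-root nodes. Reading the $j$-th term on the right as $n$ times the probability that a ball chosen uniformly at random among the $n$ balls sits at depth $\ge j$, this gives $E[P_n]=n\,E[T_n]$, with $T_n$ the depth of a uniformly random ball. So everything reduces to the single-ball expansion $E[T_n]=\mu^{-1}\log n+c_p+o(1)$.

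\emph{Reduction to a stopped Markov chain.} Tracking the random ball down from the root, and using that a subtree of size $m$ is conditionally an independent split tree of size $m$, one sees that $T_n$ is the absorption time of the homogeneous Markov chain $(Z_j)_{j\ge0}$ on $\{1,2,\dots\}$ with $Z_0=n$ and the following transition from a state $m>s$: absorb with probability $s_0/m$, otherwise move to $k$ with probability $\frac{bk}{m}P(N_1=k)$, where $N_1=s_1+\operatorname{Bin}(m-s_0-bs_1,V)$ conditionally on $V$; all states $\le s$ are absorbing. (The relation $bE[V]=1$ makes this a genuine kernel; $E[Z_{j+1}\mid Z_j=m]\le\rho m+O(1)$ with $\rho=bE[V^2]<1$ gives the a priori bound $E[T_n]=O(\log n)$.) Put $S_j:=\log Z_j$. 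The chain is non-increasing with $0\le S_j\le\log n$, and a short computation with the size-biased child law yields $E[S_{j+1}-S_j\mid Z_j=m]=-\mu+\eta_m$ with $\eta_m\to0$, the limiting drift $-\mu$ being exactly $bE[V\log V]$. Hence $M_{j}:=S_j+\sum_{i<j}(\mu-\eta_{Z_i})$, stopped at $T_n$, is a martingale, and optional stopping (legitimate since $S$ and $\eta$ are bounded and $E[T_n]<\infty$) gives
\[
E[T_n]\;=\;\frac1\mu\Bigl(\log n-E[S_{T_n}]+E\Bigl[\,\sum_{i<T_n}\eta_{Z_i}\Bigr]\Bigr).
\]
It thus remains to prove that $E[S_{T_n}]$ and $E[\sum_{i<T_n}\eta_{Z_i}]$ converge to finite limits as $n\to\infty$; then the theorem holds with $c_p=\mu^{-1}\bigl(\lim_n E[\sum_{i<T_n}\eta_{Z_i}]-\lim_n E[S_{T_n}]\bigr)$.

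\emph{Convergence of the boundary terms.} Both limits are governed by the chain near absorption. Fix a large $K\ge s$ and let $\tau_K=\min\{j:Z_j\le K\}$. Before $\tau_K$ no absorption occurs, and the increments of $(S_j)_{j\le\tau_K}$ converge, as the current size grows, to $\log\widetilde V$, where $\widetilde V$ is the size-biased splitter (of mean $-\mu$). Since $f_V$ is a Lebesgue density, $\log\widetilde V$ is spread out, so a renewal theorem applies and the law of $Z_{\tau_K}$ converges in total variation as $n\to\infty$ to a limiting overshoot distribution; the assumption $F_V(x)<1$ for all $x<1$ is what keeps this limit non-degenerate (a child of a large subtree can receive arbitrarily few balls, so the window near the bottom is genuinely reached and not badly overshot). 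From $Z_{\tau_K}\le K$ absorption follows in $O_K(1)$ further steps, with a limiting joint law and in particular a limiting law for $Z_{T_n}$, which yields the convergence of $E[S_{T_n}]$. For $E[\sum_{i<T_n}\eta_{Z_i}]$ I would split the sum at level $K$: the part over $\{Z_i\le K\}$ has boundedly many terms and stabilises by the above, while the part over $\{Z_i>K\}$ is $o_K(1)$ uniformly in $n$, because $|\eta_m|=O(m^{-1}\log m)$ and the quasi-geometric decay $Z_i\asymp n e^{-\mu i}$ turns the corresponding sum into a geometric tail of size $O(K^{-1}\log K)$; letting $K\to\infty$ completes the argument.

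\emph{Main obstacle.} The identity and the martingale computation are routine. The real work---and the only place the general assumption enters---is the total-variation convergence of the hitting law of a bounded window for the \emph{size-dependent} kernel (a Markov-renewal statement rather than an ordinary-renewal one), together with the uniform-in-$n$ estimate on $\eta_m$. The latter requires controlling the fluctuations of $N_1$ about $mV$ and, delicately, the contribution of small values of $V$; here the size-biased weights $\frac{bk}{m}P(N_1=k)$, which down-weight small children, are exactly what makes the estimates close.
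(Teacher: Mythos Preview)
Your approach and the paper's share the same core: both track the size-biased ``follow a uniformly random ball'' Markov chain (the paper's transition kernel $\nu_n(\{k\})=b\tfrac{k}{n}P(I_{n,1}=k)+\tfrac{s_0}{n}\1_{\{k=n-s_0\}}$ is your chain with the absorption mass artificially relocated so that $\nu_n$ is a probability measure), both hinge on total-variation convergence of the law of the chain when it first enters a bounded window (the paper's Lemma~\ref{le-roesler}, verified for split trees in Lemma~\ref{le-tv}), and both need summability of the drift correction along the trajectory (the paper's Lemma~\ref{le-bruhn-1}). The packaging differs: the paper sets $H_n:=(E[P_n]-\mu^{-1}n\log n)/n$, derives the recursion $H_n=\sum_k\nu_n(\{k\})H_k+r(n)$ with $r(n)=O(n^{-\delta})$, and shows $(H_n)$ is Cauchy via Bruhn's representation formula (Lemma~\ref{le-representation}); you read $E[P_n]/n=E[T_n]$ as a hitting time and extract the leading term by optional stopping. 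Your route exhibits $\mu^{-1}$ as the reciprocal drift more transparently; the paper's abstract recursion framework pays off later when the toll becomes unbounded (the Wiener index, Section~\ref{sec-wi}).

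Two points need tightening. First, the claim ``before $\tau_K$ no absorption occurs'' fails whenever $s_0>0$: the ball may sit at the root of any subtree of size $m>s$ with probability $s_0/m$, so $T_n<\tau_K$ is possible and $Z_{T_n}$ may be large. This is harmless---early absorption while $Z_j>K$ has probability $O(1/K)$ uniformly in $n$, by the geometric decay of $Z_j$---but must be accounted for. Second, the rate $|\eta_m|=O(m^{-1}\log m)$ is optimistic; what a direct argument yields (via H\"older continuity of $x\mapsto x\log x$ together with $E|I_{m,1}/m-V|=O(m^{-1/3})$, cf.\ Lemma~\ref{le-conv-I}) is only $|\eta_m|=O(m^{-\delta})$ for some $\delta\in(0,1)$. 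This weaker bound still makes the sum above level $K$ of order $O(K^{-\delta})$, so your splitting argument survives. The TV convergence step you rightly flag as the crux is not a consequence of an ordinary renewal theorem (the kernel is state-dependent); in the paper it is handled by the coupling argument in Lemma~\ref{le-roesler}, whose hypotheses are where the density assumption and the condition $F_V(x)<1$ for $x<1$ are actually used (Lemma~\ref{le-tv}).
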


To state the result which follows from the combination of the limit theorem from \citet{neininger_rueschendorf_99} with Theorem \ref{th} we introduce some notation.
By $\mathcal M_{0,2}$ we denote the set of centered probability measures on $\R$ with finite second moments.
We denote the distribution of a random variable $X$ by $\mathcal L(X)$ or $P^X$.
The Wasserstein-metric $\ell_2$ on $\mathcal M_{0,2}$ is defined by
\begin{equation}
\ell_2(\nu_1,\nu_2):=\inf\{\|X-Y\|_2:\mathcal L(X)=\nu_1, \mathcal L(Y)=\nu_2\}
\end{equation}
where the $L_2$-norm $\|\cdot\|_2$ is given by \begin{math}\|X\|_2=(E[\|X\|^2])^{1/2}\end{math}.
For random variables $X$ and $Y$ we set $\ell_2(X,Y):=\ell_2(\cL(X),\cL(Y))$.
It is well known that convergence with respect to the metric $\ell_2$ (denoted by $\stackrel{\ell_2}{\longrightarrow}$) is equivalent to weak convergence plus convergence of the second moments (see e.g.\ \citet{bickel_freedman_81}).

\begin{korollar}\label{cor-ipl}
Let $P_n$ denote the internal path length in a random split tree of size $n$ where the one-dimensional marginal distribution of the splitting vector $(V_1,\ldots,V_b)$ fulfills the general assumption.
Define $X_n:=(P_n-E[P_n])/n$.
Then the following holds true:
\begin{enumerate}
\item As $n\to\infty$ we have $\ell_2(X_n,X)\to 0$ where $\cL(X)$ is the in $\mathcal M_{0,2}$ unique solution of the fixed point equation
\[
X\eqd\sum_{k=1}^bV_kX^{(k)}+1+\frac 1{\mu}\sum_{k=1}^bV_k\log V_k
\]
where $\mu:=-bE[V_1\log V_1]$, $\mathcal L(X^{(k)})=\mathcal L(X)$ for all $k=1,\ldots,b$ and $X,X^{(1)},\ldots,X^{(b)},(V_1,\ldots,V_b)$ are independent.
\item
In particular, the convergence in a) implies 
\[
\Var(P_n)=\sigma^2n^2+o(n^2)
\]
with
\[
\sigma^2=\left(\frac 1{\mu^2}E\left[\left(\sum_{k=1}^bV_k\log V_k\right)^{\kern-0.6ex 2\kern+0.6ex}\right]-1\right)\left(1-\sum_{k=1}^bE\left[V_k^2\right]\right)^{\kern-0.6ex-1}.
\]
\item Exponential moments exist and converge,
\[
E[\exp(\lambda X_n)]\to E[\exp(\lambda X)],\qquad\lambda\in\R.
\]
\item For all $k\in\N$ we have as $n\to\infty$,
\[
P(|P_n-E[P_n]|\ge\epsi E[P_n])=O(n^{-k}).
\]
\end{enumerate}
\end{korollar}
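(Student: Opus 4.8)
The plan is to obtain the corollary by feeding the second‑order expansion of Theorem~\ref{th} into the universal limit theorem \citet[Theorem~5.1]{neininger_rueschendorf_99}; all four items then follow, either as part of that theorem or as short consequences.

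First I would record the recursive structure of the split tree. For $n$ large the root has already split, retains $s_0$ balls, and has $b$ subtrees below it of sizes $I^{(n)}=(I_1^{(n)},\ldots,I_b^{(n)})$ with $\sum_{k=1}^b I_k^{(n)}=n-s_0$; conditionally on $\cV=(V_1,\ldots,V_b)$ the vector $I^{(n)}$ is essentially multinomial, and $I_k^{(n)}/n\to V_k$ in $L_2$. Since a ball in the $k$-th subtree has depth one more than its depth within that subtree while the $s_0$ root balls have depth $0$,
\[
P_n\eqd (n-s_0)+\sum_{k=1}^b P^{(k)}_{I_k^{(n)}},
\]
with $(P^{(k)}_m)_{m\ge0}$, $k=1,\ldots,b$, independent copies of $(P_m)_{m\ge0}$, independent of $(\cV,I^{(n)})$. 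By Theorem~\ref{th}, $E[P_n]=\tfrac1\mu n\log n+c_p n+o(n)$, which is precisely the expansion \eqref{as_form} with $d_1=1/\mu$, $d_2=c_p$ — the hypothesis of \citet[Theorem~5.1]{neininger_rueschendorf_99}.

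Passing to $X_n=(P_n-E[P_n])/n$ turns the recursion into
\[
X_n\eqd\sum_{k=1}^b\frac{I_k^{(n)}}{n}\,X^{(k)}_{I_k^{(n)}}+b_n,\qquad b_n:=\frac1n\Bigl((n-s_0)+\sum_{k=1}^b E\bigl[P^{(k)}_{I_k^{(n)}}\bigm|I^{(n)}\bigr]-E[P_n]\Bigr),
\]
with $X^{(k)}_m:=(P^{(k)}_m-E[P_m])/m$. Substituting $E[P_m]=\tfrac1\mu m\log m+c_pm+o(m)$ and using $\sum_k I_k^{(n)}=n-s_0$ and $I_k^{(n)}/n\to V_k$ in $L_2$, the $n\log n$-terms cancel because $\sum_k V_k=1$ and the linear terms cancel thanks to the $c_p m$ summand, giving $b_n\to1+\tfrac1\mu\sum_{k=1}^b V_k\log V_k$ in $L_2$ — this is the one point where the full second-order form of $E[P_n]$, not merely $E[P_n]=\tfrac1\mu n\log n+o(n\log n)$, is essential. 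Moreover $\sum_{k=1}^b E[(I_k^{(n)}/n)^2]\to\sum_{k=1}^b E[V_k^2]$, and the limit is strictly below $1$: since the marginal of $V$ has a density, $P(V_k=1)=0$ for each $k$, so $\max_k V_k<1$ and hence $\sum_k V_k^2<1$ almost surely. All hypotheses of \citet[Theorem~5.1]{neininger_rueschendorf_99} being verified, part~a) follows: $\ell_2(X_n,X)\to0$ with $\cL(X)$ the unique element of $\mathcal M_{0,2}$ solving the stated fixed-point equation.

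Part~b) I would get from $\ell_2$-convergence, which entails convergence of second moments: $\Var(P_n)/n^2=E[X_n^2]\to E[X^2]$. Writing $W:=1+\tfrac1\mu\sum_{k=1}^b V_k\log V_k$, squaring the fixed-point equation and using that $X^{(1)},\ldots,X^{(b)}$ are independent of each other and of $\cV$ and centered (so $E[X]=0$) makes the mixed terms vanish and yields $E[X^2]\bigl(1-\sum_{k=1}^b E[V_k^2]\bigr)=E[W^2]$; expanding $E[W^2]$ and inserting $E\bigl[\sum_{k=1}^b V_k\log V_k\bigr]=bE[V_1\log V_1]=-\mu$ gives $E[W^2]=\tfrac1{\mu^2}E\bigl[(\sum_{k=1}^b V_k\log V_k)^2\bigr]-1$, i.e.\ the claimed $\sigma^2$. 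For~c), the toll $b_n$ is uniformly bounded (since $0\le-t\log t\le1/e$ on $[0,1]$, so is its limit $W$), and together with $\sum_k I_k^{(n)}/n=1-s_0/n\to1$ this places the recursion in the range of the exponential-moment strengthening of the contraction argument contained in \citet[Theorem~5.1]{neininger_rueschendorf_99}, giving $E[\exp(\lambda X_n)]\to E[\exp(\lambda X)]<\infty$ for all $\lambda\in\R$. Finally~d) is a Chernoff bound off~c): since $E[P_n]\sim\tfrac1\mu n\log n$, for fixed $\epsi>0$ and large $n$
\[
P\bigl(|P_n-E[P_n]|\ge\epsi E[P_n]\bigr)=P\bigl(|X_n|\ge\epsi E[P_n]/n\bigr)\le e^{-\lambda\epsi E[P_n]/(2n)}\bigl(E[e^{\lambda X_n}]+E[e^{-\lambda X_n}]\bigr),
\]
and as the exponential moments stay bounded in $n$ while $E[P_n]/n\sim\tfrac1\mu\log n\to\infty$, choosing $\lambda$ large makes this $O(n^{-k})$ for any $k\in\N$. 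The genuinely delicate step is the $L_2$-limit $b_n\to W$ (hence the necessity of the second-order expansion) together with the check $\sum_k E[V_k^2]<1$; everything else is bookkeeping within \citet[Theorem~5.1]{neininger_rueschendorf_99}.
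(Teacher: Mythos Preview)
Your proposal is correct and follows essentially the same route as the paper: parts a), c), d) are obtained by combining the second-order expansion of Theorem~\ref{th} with \citet[Theorem~5.1]{neininger_rueschendorf_99}, and part~b) is computed directly from the fixed-point equation using that $\ell_2$-convergence implies convergence of second moments. The paper's own proof is terser---it simply cites \citet[Theorem~5.1]{neininger_rueschendorf_99} for a), c), d) without unpacking the verification of hypotheses or the Chernoff step---but your more explicit treatment is the same argument.
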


\begin{bem}
The tail bound given in d) is known not to be sharp in particular examples.
\citet{mcdiarmid_hayward_96} and \citet{fill_janson_02} give a more precise bound for the random binary search tree.
\end{bem}

The Wiener index of a random split tree is defined as the sum of the distances between all unordered pairs of balls, where the distance between two balls is given by the minimum number of edges connecting the nodes which are associated to the balls.
For trees, the two dimensional vector consisting of the Wiener index and the internal path length suffices a recursion formula similar to that of the latter one.
Using this recursion formula, \citet{neininger_02} proved a limit theorem for the Wiener index of the random binary search tree and the random recursive tree by the use of the multivariate contraction theorem.
In a final remark, \citet{neininger_02} mentioned that a limit theorem for the Wiener index of the general split tree can be proved in a similar way after determining the asymptotic expansion of its expectation sufficiently well.

We prove this asymptotic expansion and use the contraction method to obtain the limit theorem for the Wiener index of random split trees which fulfil the general assumption.

\begin{satz}\label{th-wiener}
Let $W_n$ denote the Wiener index in a random split tree of size $n$ with branching factor $b$ where the one-dimensional marginal distribution $V$ of the splitting vector fulfills the general assumption.
Then there exists a constant $c_w\in\R$ with
\[
 E[W_n]=\frac 1{\mu}n^2\log n+c_wn^2+o(n)
\]
as $n\to\infty$ where $\mu=-bE[V\log V]$.
\end{satz}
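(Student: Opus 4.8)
The plan is to reduce Theorem~\ref{th-wiener} to Theorem~\ref{th} via an exact combinatorial identity relating $W_n$ to $P_n$. For any finite rooted tree carrying $n$ balls at its nodes, let $N(v)$ be the number of balls in the subtree rooted at $v$. The edge joining a non-root node $v$ to its parent separates $N(v)$ balls from the remaining $n-N(v)$, so it lies on the path between exactly $N(v)(n-N(v))$ unordered ball-pairs; summing over edges, and using the equally elementary $P_n=\sum_{v\neq\mathrm{root}}N(v)$, one obtains
\[
W_n=\sum_{v\neq\mathrm{root}}N(v)\bigl(n-N(v)\bigr)=nP_n-Y_n,\qquad Y_n:=\sum_{v\neq\mathrm{root}}N(v)^2 .
\]
Hence $E[W_n]=nE[P_n]-E[Y_n]$, and by Theorem~\ref{th} it suffices to prove
\[
E[Y_n]=\gamma\,n^2+o(n^2),\qquad \gamma:=\frac{bE[V^2]}{1-bE[V^2]},
\]
the theorem then following with $c_w=c_p-\gamma$; here $1-bE[V^2]=1-\sum_{k}E[V_k^2]>0$ because under the general assumption $V$ has a Lebesgue density and hence no atom at $0$ or $1$, the same positivity exploited in Corollary~\ref{cor-ipl}b).

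To obtain $E[Y_n]$ I would use the self-similar structure of the split tree. Conditionally on the root split, the subtree below the $i$-th child is an independent split tree on $N_i$ balls ($N_i$ its size) and $N(v)$ is intrinsic to that subtree, so
\[
Y_n\eqd\sum_{i=1}^b Y^{(i)}_{N_i}+\sum_{i=1}^b N_i^2
\]
with $Y^{(1)},\dots,Y^{(b)}$ i.i.d.\ copies of the $Y$-process, independent of $(N_1,\dots,N_b)$; by symmetry $g_n:=E[Y_n]$ satisfies $g_n=bE[g_{N_1}]+bE[N_1^2]$. From the control of subtree sizes that already underlies Theorem~\ref{th} (equivalently, from \citet{devroye_99}) one has $N_1/n\to V_1$ with convergence of second moments, i.e.\ $E[N_1^2]=E[V^2]n^2+o(n^2)$, and the stated $\gamma$ is precisely the one for which $h_n:=g_n-\gamma n^2$ obeys $h_n=bE[h_{N_1}]+e_n$ with $e_n=(\gamma+1)bE[N_1^2]-\gamma n^2=o(n^2)$. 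Unrolling this recursion along the infinite $b$-ary tree expresses $h_n$ as $\sum_v E[e_{N(v)}]$ up to a boundary term that is $O(n)$ since a split tree of size $n$ has only $O(n)$ subtrees of bounded size; bounding $|e_m|\le\epsi m^2$ for large $m$ against the crude a priori estimate $\sum_v E[N(v)^2]=O(n^2)$ then yields $|h_n|\le O(\epsi n^2)+O_{\epsi}(n)$ for every $\epsi>0$, hence $h_n=o(n^2)$.

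The form in which the paper's own machinery applies most directly is obtained by unfolding $Y_n$ along a uniformly chosen ball: for each $k$ the quantity $\sum_{v:\,|v|=k}N(v)^2$ equals the sum over all balls $a$ of the size of the depth-$k$ subtree containing $a$, so $E[Y_n]=n\,E\bigl[\sum_{k\ge1}M_k\bigr]$, where $(M_k)_{k\ge0}$ with $M_0=n$ is the sequence of subtree sizes along the root-to-ball path of a uniform ball and $M_k=0$ once the ball has been reached. This $(M_k)$ is precisely a properly stopped homogeneous Markov chain of the kind used to prove Theorem~\ref{th}, $\sum_k M_k$ is an additive functional of it, and $M_{k+1}/M_k$ converges, as $M_k\to\infty$, to the size-biased split ratio, whose mean is $bE[V^2]<1$; the total-variation limit theorems and renewal arguments of the paper then give $E\bigl[\sum_k M_k\bigr]=\gamma n+o(n)$ with the same $\gamma$.

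The main obstacle is the same one that makes Theorem~\ref{th} nontrivial: establishing the second-moment estimate $E[N_1^2]=E[V^2]n^2+o(n^2)$ --- and the $O(n)$ bound on the number of small subtrees --- with honest control of the split-tree boundary effects coming from $s_0,s_1$ and of the near-lattice behaviour that the general assumption is designed to exclude. Once these properties of the subtree-size distribution are in hand, the step from $Y_n$ to $E[W_n]$ is bookkeeping. As a sanity check, for the random binary search tree ($V$ uniform on $[0,1]$, so $\mu=1/2$ and $E[V^2]=1/3$) this gives $\gamma=2$, so $E[Y_n]=2n^2+o(n^2)$ and the leading term $E[W_n]\sim 2n^2\log n$ matches the classical asymptotics.
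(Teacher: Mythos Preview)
Your reduction via the edge--cut identity $W_n=nP_n-Y_n$ with $Y_n=\sum_{v\neq\mathrm{root}}N(v)^2$ is correct and gives a genuinely different proof from the paper's. The paper does \emph{not} use this identity: it works directly with the recursion \eqref{rekursion-wiener-index} for $E[W_n]$, substitutes the expansion of $E[P_k]$ from Theorem~\ref{th}, and obtains for $H_n:=(E[W_n]-\mu^{-1}n^2\log n)/n$ a recurrence of Bruhn's type with an \emph{unbounded} toll $r(n)=\tilde d\,n+o(n)$. The point of Lemma~\ref{le-hauptteil} and Corollary~\ref{co-hauptteil} is precisely to extend Bruhn's machinery to such tolls, and these are then applied to show $H_n/n\to c_w$. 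Your route sidesteps this extension entirely: once $W_n=nP_n-Y_n$, the $\log$ term is inherited from Theorem~\ref{th}, and what remains is the purely polynomial asymptotic $E[Y_n]=\gamma n^2+o(n^2)$, which only needs the contraction $bE[V^2]<1$ together with the second--moment input $E[I_{n,1}^2]=E[V^2]n^2+o(n^2)$ already proved in Lemma~\ref{le-exp-I}. As a bonus, your argument identifies the constant explicitly as $c_w=c_p-\gamma$ with $\gamma=bE[V^2]/(1-bE[V^2])$, whereas the paper only establishes existence of $c_w$.

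Two technical points would need to be tightened in your write--up. First, the ``crude a~priori estimate'' $g_n:=E[Y_n]=O(n^2)$ is not automatic; you should prove it, e.g.\ by induction using $g_n\le (C+1)\,bE[I_{n,1}^2]$ and the fact (from Lemma~\ref{le-exp-I}) that $bE[I_{n,1}^2]/n^2\to bE[V^2]<1$, so the induction closes for $n$ large and the small $n$ are absorbed into the constant. Second, your claim that the contribution from small subtrees is $O_\epsi(n)$ implicitly uses that the expected number of nodes is $O(n)$, which is the Holmgren--type result \eqref{holmgren} and is not proved in the present paper under the general assumption alone. A safe replacement is to bound $\sum_{v:\,0<N(v)<m_0}|e_{N(v)}|\le C(m_0)\sum_{v}N(v)\,\mathbf 1_{\{N(v)>0\}}$, whose expectation is $E[P_n]+n=O(n\log n)$ by Devroye's depth result; this is still $o(n^2)$ and the conclusion $h_n=o(n^2)$ follows as you outlined. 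With these two fixes your argument is complete and strictly more elementary than the paper's, at the cost of being specific to the Wiener index rather than providing a general tool for linear--growth tolls.
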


We denote by $\cM_{0,2}^2$ the set of centered probability measures on $\R^2$ with finite second moments.
The Wasserstein-metric $\ell_2$ on the set $\cM_{0,2}^2$ is defined similarly to the one-dimensional case.
\begin{satz}%
\label{th-limit-wiener}
Let $(W_n,P_n)$ denote the  vector consisting of the Wiener index and the internal path length of a random split tree of size $n$ with branching factor $b$ where the one-dimensional marginal distribution of the splitting vector $(V_1,\ldots,V_b)$ fulfills the general assumption.
Then the following holds true:
\begin{enumerate}
\item We have as $n\to\infty$,
\[
\ell_2\left(\left(\frac{W_n-E[W_n]}{n^2},\frac{P_n-E[P_n]}{n}\right),(W,P)\right)\to 0
\]
where $(W,P)$ is the unique distributional fixed-point of the map $T:\mathcal M_{0,2}^2\to\mathcal M_{0,2}^2$ given for $\nu\in\mathcal M_{0,2}^2$ by
\[
T(\nu):=\mathcal L\left(\sum_{i=1}^b\begin{bmatrix} V_i^2&V_i(1-V_i)\\0&V_i\end{bmatrix}\begin{pmatrix} X_1^{(i)}\\X_2^{(i)}\end{pmatrix}+\begin{pmatrix} b_1^\ast\\b_2^\ast\end{pmatrix}\right)
\]
with
\[
\begin{pmatrix} b_1^\ast\\b_2^\ast\end{pmatrix}=\frac 1\mu \sum_{i=1}^b V_i\log V_i\begin{pmatrix} 1\\1\end{pmatrix}+\begin{pmatrix}(1+c_p-c_w)\left(1-\sum_{i=1}^bV_i^2\right)\\1\end{pmatrix}
\]
where $\mathcal L(X^{(i)})=\nu$ for $X^{(i)}:=(X_1^{(i)},X_2^{(i)})$, and $X^{(1)},\ldots,X^{(b)},D,Z$ are independent.
\item
In particular, the convergence in a) implies 
\[
\Var(W_n)=\sigma^2n^4+o(n^4)
\]
with some constant $\sigma^2>0$.
\end{enumerate}
\end{satz}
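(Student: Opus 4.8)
\medskip
\noindent\textbf{Proof of Theorem~\ref{th-limit-wiener} (sketch).}
The plan is to apply the multivariate contraction method, as \citet{neininger_02} did for the binary search tree, the new inputs being the sharp mean expansions of Theorems~\ref{th} and~\ref{th-wiener}. Let $(N_1,\dots,N_b)$ be the numbers of balls in the $b$ root subtrees; for $n\ge s+1$ one has $\sum_{k=1}^bN_k=n-s_0$ and, conditionally on $(N_1,\dots,N_b)$, the root subtrees are independent split trees of the respective sizes \citep{devroye_99}. Splitting the sum defining $W_n$ according to whether two balls lie in the same subtree, in different subtrees, or at least one of them is stored at the root, and using that the depth of a ball lying in subtree $k$ equals one plus its depth inside that subtree, one gets for $n\ge s+1$
\[
P_n\eqd\sum_{k=1}^bP^{(k)}_{N_k}+(n-s_0),
\]
\[
W_n\eqd\sum_{k=1}^bW^{(k)}_{N_k}+\sum_{k=1}^b\bigl(N_k+P^{(k)}_{N_k}\bigr)(n-s_0-N_k)+s_0\Bigl(n-s_0+\sum_{k=1}^bP^{(k)}_{N_k}\Bigr),
\]
where the pairs $(W^{(k)}_m,P^{(k)}_m)_m$, $k=1,\dots,b$, are independent copies of $(W_m,P_m)_m$ independent of $(N_1,\dots,N_b)$. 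Centring and rescaling, set $Y_n:=\bigl((W_n-\E[W_n])/n^2,\,(P_n-\E[P_n])/n\bigr)$; inserting $\E[P_m]=\mu^{-1}m\log m+c_pm+o(m)$ and $\E[W_m]=\mu^{-1}m^2\log m+c_wm^2+o(m)$ and using $\sum_kN_k=n-s_0$, this becomes $Y_n\eqd\sum_{k=1}^bA_k(n)\,Y^{(k)}_{N_k}+b(n)$ with $2\times2$ random matrices $A_k(n)$ and a random vector $b(n)$ that are measurable with respect to $(N_1,\dots,N_b)$, hence independent of the families $(Y^{(k)}_m)_m$.

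The core step --- and the one I expect to be the main obstacle --- is the $L_2$-convergence $A_k(n)\to A_k^\ast=\begin{bmatrix}V_k^2&V_k(1-V_k)\\0&V_k\end{bmatrix}$ and $b(n)\to b^\ast=(b_1^\ast,b_2^\ast)$. This uses the classical concentration $N_k/n\to V_k$ (almost surely and in $L_2$; \citealp{devroye_99}), dominated convergence for the bounded functionals $x\mapsto x,\,x^2,\,x^2\log x$ of $N_k/n$, and the cancellation of all $\log n$-terms, which reproduces exactly the computation behind the definition of $T$ and leaves the constant $b^\ast$ of the theorem. It is precisely here that the second-order mean asymptotics are indispensable: the $o(m)$-remainders are just small enough to disappear after dividing by $n^2$ (resp.\ $n$) and summing over $k$. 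Moreover, since $\E[Y_n]=0$ and $\E[Y^{(k)}_{N_k}\mid N_k]=0$, we have $\E[b(n)]=0$ for $n\ge s+1$, hence $\E[b^\ast]=0$; comparing first coordinates gives the identity $1+c_p-c_w=\bigl(1-\sum_{k}\E[V_k^2]\bigr)^{-1}$.

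It then remains to check the hypotheses of the multivariate contraction theorem (see \citealp{neininger_02}), the only non-routine one being that $T$ is an $\ell_2$-contraction. A crude bound through $\|A_k^\ast\|$ does not suffice, but $A_k^\ast$ is upper triangular, so $T$ has a cascading structure: its second coordinate is the fixed-point map for the internal path length, which contracts because $\sum_{k=1}^b\E[V_k^2]=b\,\E[V^2]<b\,\E[V]=1$ --- strictly, since $V$ has a Lebesgue density and hence $\E[V(1-V)]>0$; once the $P$-component is fixed as the unique solution of Corollary~\ref{cor-ipl}, the first coordinate is affine in the $W$-component with contraction factor $\sum_{k=1}^b\E[V_k^4]\le\sum_{k=1}^b\E[V_k^2]<1$. (Equivalently, the weighted norm $\|(x_1,x_2)\|_\lambda^2=x_1^2+\lambda^{-2}x_2^2$ makes $\sum_k\E\|A_k^\ast\|_\lambda^2<1$ for $\lambda$ small.) Negligibility of extreme subtree sizes is immediate from $N_k/n\to V_k\in(0,1)$ almost surely. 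Hence $T$ has a unique fixed point $(W,P)\in\mathcal M_{0,2}^2$, and the theorem yields $\ell_2(Y_n,(W,P))\to0$, which is a). Part b) follows since $\ell_2$-convergence implies convergence of second moments, so $\Var(W_n)/n^4\to\Var(W)=:\sigma^2$; and $\sigma^2>0$, for if $W$ were almost surely constant (hence $0$), then conditioning the fixed-point equation on $(V_1,\dots,V_b)$ would force $\bigl(\sum_{k=1}^bV_k^2(1-V_k)^2\bigr)\Var(P)=0$ almost surely, contradicting $\Var(P)>0$ (Corollary~\ref{cor-ipl}) and $V_k\in(0,1)$ almost surely.
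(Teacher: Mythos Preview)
Your proposal is correct and follows the same overall route as the paper: derive the distributional recursion for the centred and rescaled pair, establish $\ell_2$-convergence of the random coefficients $(A_k(n),b(n))\to(A_k^\ast,b^\ast)$ from Lemma~\ref{le-conv-I} together with Theorems~\ref{th} and~\ref{th-wiener}, verify the negligibility of extreme subtree sizes, and invoke the multivariate contraction theorem (the paper cites \citet[Theorem~4.1]{neininger_01}).

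The one substantive difference is in the verification of the contraction hypothesis. The paper checks $\sum_i E\bigl\|(A_i^\ast)^TA_i^\ast\bigr\|_{\mathrm{op}}<1$ head-on: it solves the characteristic equation of $(A_i^\ast)^TA_i^\ast$ to obtain the larger eigenvalue
\[
\lambda(V_i)=V_i^2\!\left(1-V_i+V_i^2+(1-V_i)\sqrt{1+V_i^2}\right)
\]
and then proves the elementary inequality $\lambda(x)<x$ on $(0,1)$, so that $\sum_i E[\lambda(V_i)]<\sum_i E[V_i]=1$. Your weighted-norm argument (conjugating by $\mathrm{diag}(1,\lambda)$ and letting $\lambda\downarrow0$, so that the off-diagonal entry vanishes and the operator norm tends to $\max(V_i^4,V_i^2)=V_i^2$) is a legitimate alternative that exploits the triangular shape of $A_i^\ast$ and avoids the explicit quadratic formula; it is arguably more transparent and would generalise more readily, at the price of an additional continuity/dominated-convergence step in the weight parameter. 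Your preceding ``cascading'' remark is the right heuristic but, as you note, it is the weighted-norm version that actually plugs into the contraction theorem. Two further small additions in your sketch are worth keeping: the identity $1+c_p-c_w=(1-bE[V^2])^{-1}$, obtained from $E[b^\ast]=0$, and the explicit argument that $\sigma^2>0$ (the paper only asserts this); for the latter you need $\Var(P)>0$, which indeed follows from the closed form in Corollary~\ref{cor-ipl}\,b) by the strict Jensen inequality, since $\sum_k V_k\log V_k$ is not almost surely constant under the general assumption.
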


\begin{bem}
The constant $\mu=-bE[V\log V]$ in the first order terms of the expectations of the internal path length and of the Wiener index appears already in the results about the height and depth in \citet{devroye_99}.
There, the explicit values of this constant for the individual splitting distributions are given in Table 2.
\end{bem}

\begin{bem}
Besides the internal path length for the balls considered here, there is also the internal path length for the nodes where the depths of all nodes are summed up.
Since there can be up to $s$ balls in one node, these two path lengths may differ.
In \citet{holmgren_10}, the relation between the two versions is investigated.
Let $N_n$ denote the number of nodes in the random split tree with $n$ balls.
Assuming that the distribution of $-\log V$ is non-lattice, $P(V=1)=P(V=0)=0$ and
\eqn\label{holmgren}
E[N_n]=\alpha n+O\left(\frac n{(\log n)^{1+\epsi}}\right)
\nqe
for some constant $\alpha>0$ and $\epsi>0$,
\citet{holmgren_10} showed that Theorem \ref{th} implies the similar asymptotic behavior for the internal path length for the nodes in that random split tree.
This finally yields the general limit theorem for the internal path length for the nodes in split trees which additionally fulfil equation \eqref{holmgren}.
For instance, \citet{mahmoud_pittel_89} showed the stronger result $E[N_n]=\alpha n+O(n^{1-\epsi})$ in the case of the $b$-ary search tree.

It seems that there are no results on the corresponding alternative version of the Wiener index in terms of the node-to-node distances.
\end{bem}

The internal path length and the Wiener index have been considered also for random trees that do not belong to the class of split trees.
A universal limit law for the path length of simply generated trees is proved in \citet{janson_03} where the limit distribution is given as a function of the Brownian excursion. Furthermore, the moments of the limit are derived.
For the class of random increasing trees, which covers in particular the random recursive tree and the plane oriented recursive tree, the second order asymptotic of the expectation of the internal path length is derived in \citet{bergeron_flajolet_salvy_07}.
In \citet{munsonius_rueschendorf_10} the asymptotic behavior of the expectation and a limit theorem for the internal path length of random $b$-ary trees with weighted edges is proved.
By special choices of the edge weights, the analogous results are obtained for the class of random linear recursive trees, which encompasses in particular the random plane oriented recursive tree.
Tail bounds for the Wiener index of random binary search trees have been considered by \citet{alikhan_neininger_07}.

For a random split tree with $n$ balls we denote by $I_n=(I_{n,1},\ldots,I_{n,b})$ the vector of the sizes of the subtrees, i.e. the number of balls assigned to nodes in the subtrees, rooted at the children of the root.
By the construction of the split tree it follows that $I_n$ is conditionally given $\cV^{(\mathrm{root})}=(v_1,\ldots,v_b)$ multinomial distributed $M(n-s_0-bs_1;v_1,\ldots,v_b)$.
Thus, under the assumption that $V_i\eqd V_1=:V$ for all $i=2,\ldots,b$ we obtain
\eqn\label{eq-distr-I}
P(I_{n,i}=k+s_1)=\int_0^1\binom{\eta_n}{k}x^k(1-x)^{\eta_n-k}\de P^V(x),
\nqe
where we set $\eta_n:=n-s_0-bs_1$.
Throughout this paper, $\mathrm{Bin}(m,x)$ denotes a random variable with binomial distribution with parameters $m\in\N$ and $x\in[0,1]$.

The proofs of Theorem \ref{th} and Theorem \ref{th-wiener} are based on a method developed in \citet{bruhn_96} for recurrences where the toll function is bounded.
In Section \ref{sec-bruhn}, we recall definitions and results of \citet{bruhn_96} and extend his method to the case of an unbounded toll function.
We check the conditions of this method in the case of the random split tree in Section \ref{sec-splittree}.
Section \ref{sec-ipl} is devoted to the application in the case of the internal path length and the proof of Theorem \ref{th}.
In Section \ref{sec-wi} we give the proofs of Theorem \ref{th-wiener} and Theorem \ref{th-limit-wiener} concerning the Wiener index.

\paragraph{Acknowledgement.}
The author is grateful to Ralph Neininger for several hints to literature and for comments to previous versions of this paper and to Nicolas Broutin for helpful discussions and making a preliminary manuscript of the paper \citet{broutin_holmgren_11}
on the internal path length of split trees available to him.
Furthermore, he thanks an unknown referee for valuable suggestions for improvement of the paper.

\section{The setting of Bruhn}\label{sec-bruhn}
Starting from recursion formulas of the form
\[
H_n=\sum_{k=0}^{n-1}\nu_n(\{k\})H_k+r(n)
\]
where $\nu_n$ is a probability measure on $\{0,\ldots,n-1\}$ for all $n\in\N$, the main idea of \citet{bruhn_96} is to define a homogeneous Markov chain $(S_t)_{t\in\N}$ with state space $\cE=\{-\log n:n\in\N\}\cup\{1\}$ where the transition probabilities are given for $n>0$ by
\[
P(S_1=x\mid S_0=-\log n)=
\begin{cases}
\nu_n(\{e^{-x}\}),&\text{ for $x\in\{-\log(n-1),\ldots,-\log 1\}$}\\
\nu_n(\{0\}),&\text{ for $x=1$}
\end{cases}
\]
and $P(S_1=1\mid S_0=1)=1$.
Now, let $\sigma(n_1):=\inf\{t\mid S_t>-\log n_1\}$ be the stopping time when the Markov chain exceeds $-\log n_1$ for $n_1\in\N$.
Then, Bruhn proved the representation formula given in the following Lemma. (Since the PhD-thesis of Bruhn seems to be not available in English, the proofs of \citet{bruhn_96} are stated in Appendix \ref{app-b}.)

We denote by $Y_t:=S_t-S_{t-1}$ the increments of $S$.
For $x\in\cE$ we write $P_x(\cdot)$ in short for $P(\cdot\mid S_0=x)$ and correspondingly $E_x[\cdot]$ for the expectation with respect to the measure $P_x$.
We denote by $F_x$ the distribution function of $P_x^{S_1-x}$, i.e.\ $F_x(y)=P(S_1-x\le y\mid S_0=x)$.

\begin{lemma}\label{le-representation}
Let $H_n$ be a sequence of real numbers satisfying 
\[
H_n=\sum_{k=0}^{n-1}\nu_n(\{k\})H_k+r(n)
\]
for some function $r$.
Then it is for any $n_1\in\N$ with the notations above
\eqn\label{zentral}
H_n=E_{-\log n}H_{\exp(-S_{\sigma(n_1)})}+E_{-\log n}\sum_{t=0}^{\sigma(n_1)-1}r(\exp(-S_t)).
\nqe
\end{lemma}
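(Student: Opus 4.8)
The plan is to establish \eqref{zentral} by induction on $n$, using the one-step decomposition of the chain $(S_t)$ (the Markov property) together with the defining recursion $H_n=\sum_{k=0}^{n-1}\nu_n(\{k\})H_k+r(n)$. I first fix the bookkeeping convention that the absorbing state $1\in\cE$ corresponds to the index $0$, that is, $H_{\exp(-1)}:=H_0$; this is forced by the fact that the transition from $-\log n$ into state $1$ carries probability $\nu_n(\{0\})$. Next I record that $\sigma(n_1)$ is always finite — indeed bounded by $n$ when $S_0=-\log n$ — since $\nu_m$ is supported on $\{0,\dots,m-1\}$, so as long as the chain stays in $\{-\log m:m\in\N\}$ the integer $e^{-S_t}$ strictly decreases at each step and therefore within at most $n-n_1$ steps either drops below $n_1$ or the chain is absorbed; consequently the expectations in \eqref{zentral} involve only the finitely many numbers $H_0,\dots,H_n,r(1),\dots,r(n)$ and are unambiguously defined. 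The base case is immediate: if $n<n_1$ then $S_0=-\log n>-\log n_1$, so $\sigma(n_1)=0$, the sum in \eqref{zentral} is empty, and its right-hand side is $E_{-\log n}[H_{\exp(-S_0)}]=H_n$.

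For the inductive step, fix $n\ge n_1$ and assume \eqref{zentral} holds with any $m<n$ in place of $n$. Because $S_0=-\log n\le-\log n_1$ we have $\sigma(n_1)\ge1$, so peeling off the term $t=0$ (for which $\exp(-S_0)=n$) turns the right-hand side of \eqref{zentral} into $r(n)+E_{-\log n}\bigl[H_{\exp(-S_{\sigma(n_1)})}+\sum_{t=1}^{\sigma(n_1)-1}r(\exp(-S_t))\bigr]$. I then condition on $S_1$. On the event $\{S_1=-\log k\}$ with $k\in\{1,\dots,n-1\}$, which has probability $\nu_n(\{k\})$, the Markov property identifies $(S_{1+j})_{j\ge0}$ in law with the chain started from $-\log k$; writing $\sigma'$ for the corresponding exit time of that shifted chain one has $\sigma(n_1)=1+\sigma'$, hence $S_{\sigma(n_1)}=S'_{\sigma'}$ and $\sum_{t=1}^{\sigma(n_1)-1}r(\exp(-S_t))=\sum_{j=0}^{\sigma'-1}r(\exp(-S'_j))$, so the induction hypothesis (applicable since $k<n$) yields that the conditional expectation equals $H_k$. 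On the remaining event $\{S_1=1\}$, of probability $\nu_n(\{0\})$, one has $\sigma(n_1)=1$, the inner sum is empty, and $H_{\exp(-S_{\sigma(n_1)})}=H_{\exp(-1)}=H_0$. Summing the two contributions gives $\sum_{k=1}^{n-1}\nu_n(\{k\})H_k+\nu_n(\{0\})H_0=\sum_{k=0}^{n-1}\nu_n(\{k\})H_k$, so the right-hand side of \eqref{zentral} equals $r(n)+\sum_{k=0}^{n-1}\nu_n(\{k\})H_k=H_n$, completing the induction.

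An equivalent, essentially calculation-free reformulation I would also mention is the following: put $g(-\log m):=H_m$ and $g(1):=H_0$, so that the recursion becomes $g(x)=E_x[g(S_1)]+r(e^{-x})$ for every state $x$ with $e^{-x}\in\N$; then $M_t:=g(S_{t\wedge\sigma(n_1)})+\sum_{s=0}^{(t\wedge\sigma(n_1))-1}r(\exp(-S_s))$ is a martingale under $P_{-\log n}$ — the recursion is precisely the statement that its increments have vanishing conditional mean on $\{t<\sigma(n_1)\}$, while $M_t$ is frozen on $\{t\ge\sigma(n_1)\}$ — and since $\sigma(n_1)$ is bounded, $M_0=H_n$ deterministically and bounded convergence give $H_n=E_{-\log n}[M_{\sigma(n_1)}]$, which is exactly \eqref{zentral}. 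The only points that need genuine care are the bookkeeping at the absorbing state (the convention $H_{\exp(-1)}=H_0$ and the verification that the $k=0$ term of the recursion is reproduced) and the boundary behaviour of $\sigma(n_1)$ at $t=0$ that separates the base case $n<n_1$ from the inductive case $n\ge n_1$; no analytic subtlety arises because everything collapses to finite sums.
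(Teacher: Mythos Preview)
Your proof is correct and follows essentially the same approach as the paper's: both argue by induction on $n$, handle the base case via $\sigma(n_1)=0$, introduce the convention $H_{\exp(-1)}:=H_0$ to account for absorption, and in the inductive step use the one-step Markov (Chapman--Kolmogorov) decomposition together with the recursion for $H_n$. Your treatment is in fact slightly more careful at the boundary (the paper writes ``$n\le n_1$'' where ``$n<n_1$'' is what makes $\sigma(n_1)=0$), and the added martingale reformulation is a pleasant but inessential repackaging of the same computation.
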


To analyze the Markov chain $(S_t)_{t\in\N}$ we consider in the following a general state space $\cE\subset\R$.

\begin{definition}\label{def-mea}
The Markov chain $(S_t)_{t\in\N_0}$ is said to be an \emph{AR-process} (approximate renewal) if the state space $\cE$ has no lower bound, the increments $Y_t:=S_t-S_{t-1}$ are strictly positive, $F_x$ converges in distribution as $x\to-\infty$ to a distribution function $F$, i.e.\ for all points $t$ where $F$ is continuous it is
\[
\lim_{x\to-\infty}F_x(t)=F(t),
\]
and $0<\int t\,\de F(t)<\infty$.
\end{definition}

For $a\in\R_-$ we define $\bar F_a:\R\to[0,1]$ by $\bar F_a(t):=\inf_{x\le a}F_x(t)$ and $\underline F_a:\R\to[0,1]$ by $\underline F_a(t):=\sup_{x\le a}F_x(t)$.
\begin{definition}
The set of distributions $\{F_x\}$ fulfills the \emph{integrability condition} if
\[
\lim_{a\to-\infty}\int x\,\de\bar F_a(x)=\int x\,\de F(x).
\]
\end{definition}
In the case of an AR-process, the theorem of dominated convergence implies that the integrability condition is equivalent to
\eqn\label{ic}
\int x\,\de\bar F_a(x)<\infty
\nqe
for some $a\in\R$.

The first summand in \eqref{zentral} can be handled by considering the distribution of $S_{\sigma(n_1)}$.
The following key result is implicitly given in \citet{roesler_01} in a more general setting.
The essential part of \citet{roesler_01} which gives the proof is stated in Appendix \ref{app-a} in a self-contained way.
For probability measures $P$ and $Q$, let $d_\mathrm{TV}(P,Q)$ denote their total variation distance.
Moreover, we define $\tau(d):=\inf\{t:S_t\ge d\}$.
\begin{lemma}\label{le-roesler}
Let $(S_t)_{t\in\N}$ be an AR-process which fulfills the integrability condition with a discrete state space $\cE$.
If there exist $\epsi>0$, $x_0\in\R_-$ and $K>0$ such that for all $x,y\le x_0$ with $|x-y|\le K$ we have
\begin{align}\label{cond-tv}
d_\mathrm{TV}\left(P_x^{S_1},P_y^{S_1}\right){}<{}&2(1-\epsi)&&\text{and}&\lim_{x_0\to-\infty}\inf_{z<y\le x_0}P_z(S_{\tau(y)}-y\le K){}>{}&0,
\end{align}
then it holds for any $a\in\R_-$
\[
 \lim_{x_0\to-\infty}\sup_{x,y\le x_0}d_\mathrm{TV}\left(P_x^{S_{\tau(a)}},P_y^{S_{\tau(a)}}\right)=0.
\]
\end{lemma}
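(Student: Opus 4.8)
The plan is to argue by coupling. It is enough to show that for every $\delta>0$ there is $x_1\in\R_-$ such that for all $x,y\le x_1$ the chain can be realised on one probability space in two copies $(S_t)_t$, $(S'_t)_t$ with $S_0=x$, $S'_0=y$, coupled so that with probability at least $1-\delta$, uniformly in $x,y\le x_1$, the two trajectories coincide from some step onward with neither of them having reached the level $a$ before that step. On that event the first passages above $a$, say $\tau(a)$ of $S$ and $\tau'(a)$ of $S'$, occur at the same step and in the same state, so $S_{\tau(a)}=S'_{\tau'(a)}$; the coupling inequality then gives $d_\mathrm{TV}(P_x^{S_{\tau(a)}},P_y^{S_{\tau(a)}})\le 2\delta$ for all $x,y\le x_1$, and since $\sup_{x,y\le x_0}d_\mathrm{TV}(P_x^{S_{\tau(a)}},P_y^{S_{\tau(a)}})$ is non-increasing as $x_0\to-\infty$ and $\delta>0$ is arbitrary, the asserted limit is $0$. (That $\tau(a)<\infty$ almost surely follows from the increments being strictly positive together with $F_x\to F$ and $\int t\,\de F(t)\in(0,\infty)$.)

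I would build the coupling \emph{asynchronously} and in rounds: at each step only one of the two chains is advanced, except at a ``merge attempt'', where the two are advanced jointly; once the trajectories have coincided they are advanced with identical increments forever. Fix $x^\ast\le a-1$ sufficiently negative that both conclusions of \eqref{cond-tv} are in force below it, i.e.\ $d_\mathrm{TV}(P_u^{S_1},P_v^{S_1})<2(1-\epsi)$ whenever $u,v\le x^\ast$ and $|u-v|\le K$, and $\inf_{z<w\le x^\ast}P_z(S_{\tau(w)}-w\le K)\ge c$ for some $c>0$. A round begun with both chains at positions $\le x^\ast$ consists of two steps. \emph{Bring-close:} let $q$ be the position of the higher chain and run the lower chain alone until it first reaches a level $\ge q-K$; by the overshoot bound, with probability at least $c$ it then lies in $[q-K,q]$, hence within distance $K$ of the other chain. \emph{Merge:} if the bring-close step succeeded, take one joint step from the current states $u,v$ (with $|u-v|\le K$ and $u,v\le x^\ast$) via the maximal coupling of $P_u^{S_1}$ and $P_v^{S_1}$; the two chains then land on a common state with probability $1-\frac12 d_\mathrm{TV}(P_u^{S_1},P_v^{S_1})>\epsi$, and in that case the trajectories have coincided. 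By the Markov property, every round begun with both chains at positions $\le x^\ast$ produces a coincidence with conditional probability at least $\rho:=c\epsi>0$, independently of the past.

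The decisive point is to fit enough rounds below level $a$. Let $H_k$ be the highest position reached by either chain through the end of round $k$, so $H_0=\max(x,y)\le x_1$. In round $k+1$ the higher chain does not move during the bring-close step and the lower one is carried up to at most $H_k$ plus one increment, and each chain then takes one further increment in the merge step; all these increments are taken from states $\le x^\ast$, so by the integrability condition \eqref{ic} they --- and, by a standard renewal estimate, also the overshoot increment --- are stochastically dominated by an integrable random variable. Hence $H_{k+1}\le H_k+Z_{k+1}$ where $P(Z_{k+1}>M)\le\eta(M)$ with $\eta(M)\to0$ as $M\to\infty$. Given $\delta>0$, choose $N$ with $(1-\rho)^N<\delta/2$, then $M$ with $N\eta(M)<\delta/2$, and set $x_1:=\min(x^\ast,a-1)-NM$. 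For $x,y\le x_1$, on the event $\{Z_1\le M,\dots,Z_N\le M\}$ --- of probability at least $1-N\eta(M)>1-\delta/2$ --- one has $H_k\le H_0+kM\le x_1+NM=\min(x^\ast,a-1)$ for every $k\le N$, so all $N$ rounds are carried out with both chains always $\le x^\ast<a$; intersecting with this event, the probability that none of the $N$ rounds produces a coincidence is at most $(1-\rho)^N<\delta/2$. Altogether the trajectories coincide, with neither chain having reached $a$ beforehand, outside an event of probability $<\delta$, uniformly in $x,y\le x_1$, which is the requirement of the first paragraph. The main obstacle is exactly this last coordination --- guaranteeing that the random amount of height consumed by the merging procedure is, with probability close to $1$ and uniformly over the arbitrarily deep starting points, much smaller than the room $a-x_0$ available below $a$, so that the geometric factor $(1-\rho)^N$ becomes effective; the remaining work, verifying that each of the two processes is marginally a faithful copy of the chain and that $\tau(a)$, $\tau'(a)$ are its true first passages, is routine.
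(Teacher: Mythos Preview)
Your argument is correct, and it proceeds along a genuinely different line from the paper's proof. Both approaches rest on the same two ingredients --- the one--step maximal coupling (giving merge probability $>\epsi$ when the two states are within $K$) and the overshoot control from the second part of \eqref{cond-tv} --- but they organise the iteration differently. The paper does \emph{not} construct a multi--round coupling. Instead it sets
\[
\Delta(a):=\lim_{x_0\to-\infty}\sup_{x,y\le x_0}d_\mathrm{TV}\!\left(P_x^{S_{\tau(a)}},P_y^{S_{\tau(a)}}\right)
\]
and, by conditioning on where the chain started at $x$ first enters $[y,\infty)$ and applying a single Wasserstein coupling step, derives the self--referential inequality $\Delta(a)\le(1-\tilde\epsi)\,\Delta(a)+\delta$ for every $\delta>0$, whence $\Delta(a)=0$. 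The point is that the supremum inside $\Delta(a)$ already absorbs whatever states the two chains land in after one step, so no explicit bookkeeping of ``height consumed'' is needed; the integrability condition enters only to control a single overshoot via Markov's inequality. Your route trades this recursion for an explicit geometric bound $(1-\rho)^N$, at the price of having to show that $N$ rounds fit below level $a$; that is where you need the uniform tail control on $Z_{k+1}$ coming from $\int t\,\de\bar F_{x^\ast}(t)<\infty$. One small point to make precise in your write--up: the claim ``all these increments are taken from states $\le x^\ast$'' is circular as stated, since it presupposes $H_k\le x^\ast$; it is cleanest to argue sequentially that $P(H_{k+1}>x_1+(k{+}1)M\mid H_j\le x_1+jM\ \forall j\le k)\le\eta(M)$ and then sum, which gives exactly your $N\eta(M)$ bound.
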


The asymptotic behavior of the second summand in \eqref{zentral} can be analyzed by using the elementary renewal theorem.
Since the Markov chain $(S_t)_{t\in\N}$ is not a  renewal process, we couple it with three renewal processes using the functions $F$, $\bar F_a$ and $\underline F_a$.
Because of the convergence $\lim_{x\to-\infty}F_x(t)=F(t)$, the functions $\bar F_a$ and $\underline F_a$ are again distribution functions.

Considering the AR-process $(S_t)$ from above, there exists a sequence of independent random variables $(U_r)_{r\in\N}$ uniformly distributed on $[0,1]$ such that
\[
Y_t=F_{S_{t-1}}^{-1}\circ U_t
\]
for all $t\in\N$.

For $a\in\R$ we define three renewal processes $\bar S^{(a)}$, $\underline S^{(a)}$ and $\tilde S$ by $\bar S_0^{(a)}=\underline S_0^{(a)}=\tilde S_0=S_0$ and the i.i.d.\ increments $\bar Y_r^{(a)}$, $\underline Y_r^{(a)}$ and $\tilde Y_r$ given by
\begin{align*}
\bar Y_t^{(a)}&{}:={}\bar F_a^{-1}\circ U_t,&\underline Y_t^{(a)}&{}:={}\underline F_a^{-1}\circ U_t&\text{and}&&\tilde Y_r&{}:={}F^{-1}\circ U_t.
\end{align*}
Thus, for all $t\in\N$ we have $\underline Y_t^{(a)}\le S_{t}-S_{t-1}\le \bar Y_t^{(a)}$ whenever $S_{t-1}\le a$.

Moreover, for each $t\in\N$ the sequence $\bar Y^{(a)}_t$ is decreasing and $\underline Y_t^{(a)}$ is increasing as $a\to-\infty$.
Both sequences converge almost surely to $\tilde Y_r$.

Finally, we define the following stopping times for $a,d\in\R$:
\begin{align*}
\tau(d){}:={}&\inf\{t:S_t\ge d\},&\gamma(d){}:={}&\inf\{t:S_t-S_0\ge d\},\\
\bar\tau^{(a)}(d){}:={}&\inf\{t:\bar S_t^{(a)}\ge d\},&\bar\gamma^{(a)}(d){}:={}&\inf\{t:\bar S_t^{(a)}-\bar S_0^{(a)}\ge d\},\\
\underline\tau^{(a)}(d){}:={}&\inf\{t:\underline S_t^{(a)}\ge d\},&\underline\gamma^{(a)}(d){}:={}&\inf\{t:\underline S_t^{(a)}-\underline S_0^{(a)}\ge d\},\\
&\text{and}&
\tilde\gamma(d){}:={}&\inf\{t:\tilde S_t-\tilde S_0\ge d\}.
\end{align*}

Using the renewal process $(\bar S_t)_{t\in\N}$, \citet{bruhn_96} shows the following result. 
(The proof is given in Appendix\ref{app-b}.)
\begin{lemma}[\citet{bruhn_96}, Lemma 3.4]~\label{le-bruhn-1}
Consider an AR-process $(S_t)$ with the notations above.
Then there exist a real number $a_\ast$ and a positive real number $\hat u(a_\ast)$ such that for all measurable functions $l:\R\to\R_+$, all real numbers $y,z$ and all $x\in\cE$ with $x<y<z<a_\ast$ we have
\[
E_x\left[\sum_{t=\tau(y)}^{\tau(z)-1}l(S_t)\right]\le\hat u(a_\ast)\sum_{n=\lfloor y\rfloor}^{\lceil z\rceil}\sup_{t\in(n-1,n]}l(t).
\]
\end{lemma}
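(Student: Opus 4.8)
The plan is to reduce the inequality to one universal estimate — a bound, uniform in the starting state and in the two levels, on the expected number of steps $S$ spends in a single unit interval before it reaches level $z$ — and then to obtain that estimate by dominating $S$ from below by an ordinary renewal process.

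First, for $\tau(y)\le t<\tau(z)$ one has $y\le S_t<z$, so $S_t$ lies in exactly one of the intervals $(n-1,n]$ with $\lfloor y\rfloor\le n\le\lceil z\rceil$, and there $l(S_t)\le\sup_{u\in(n-1,n]}l(u)$. Putting $N_n:=\#\{t:0\le t<\tau(z),\ S_t\in(n-1,n]\}$ and interchanging summation and expectation, this yields
\[
E_x\Bigl[\sum_{t=\tau(y)}^{\tau(z)-1}l(S_t)\Bigr]\ \le\ \sum_{n=\lfloor y\rfloor}^{\lceil z\rceil}\Bigl(\sup_{u\in(n-1,n]}l(u)\Bigr)\,E_x[N_n],
\]
so it is enough to exhibit a finite constant $\hat u(a_\ast)$, independent of $x\in\cE$, of $n$, and of $z<a_\ast$, such that $E_x[N_n]\le\hat u(a_\ast)$. (That $\tau(z)<\infty$ $P_x$-a.s.\ follows from the same domination used below: while $S_t<z<a_\ast$ the increments of $S$ dominate those of a renewal process with positive mean, so $S$ reaches $z$ in finitely many steps.)

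To bound $E_x[N_n]$ I would introduce $\theta_m:=\inf\{t:S_t>m\}$. Since $S$ is nondecreasing, the times at which $S\in(n-1,n]$, if any occur before $\tau(z)$, form the integer block $\{\theta_{n-1},\dots,\theta_n-1\}$ truncated at $\tau(z)$, whence $N_n\le\1_{\{\theta_{n-1}<\tau(z)\}}(\theta_n-\theta_{n-1})$. On the event $\{\theta_{n-1}<\tau(z)\}$ one has $S_{\theta_{n-1}}<z<a_\ast$, and $S_{\theta_{n-1}}$ either lies in $(n-1,n]$ or already exceeds $n$ (so that $\theta_n=\theta_{n-1}$); applying the strong Markov property at $\theta_{n-1}$ and noting that from a point $w\in(n-1,n]$ we have $\inf\{t:S_t>n\}=\inf\{t:S_t-w>n-w\}\le\gamma(1)$ because $n-w<1$, we arrive at
\[
E_x[N_n]\ \le\ \sup_{w<a_\ast}E_w[\gamma(1)].
\]

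It remains to bound $\sup_{w<a_\ast}E_w[\gamma(1)]$, and I expect this to be the main obstacle. Set $a:=a_\ast+1$ and couple the chain started from $w$ with the renewal process $\underline S^{(a)}$ started from the same point, using a fresh i.i.d.\ uniform sequence and the increments $\underline Y_t^{(a)}=\underline F_a^{-1}\circ U_t$. For $t<\gamma(1)$ one has $S_t-w<1$, hence $S_t<w+1<a$, so the inequality $\underline Y_t^{(a)}\le S_t-S_{t-1}$ holds throughout $t\le\gamma(1)$; consequently $S_t-w\ge\underline S_t^{(a)}-w$ for $t\le\gamma(1)$, which forces $\gamma(1)\le\underline\gamma^{(a)}(1)$ pathwise, and therefore $E_w[\gamma(1)]\le E[\underline\gamma^{(a)}(1)]$, a number not depending on $w$. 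Now $\underline F_a$ is a proper distribution function concentrated on $(0,\infty)$ — indeed $\underline F_a(0)=\sup_{x\le a}P_x(S_1-x\le0)=0$ — and since $\underline F_a\ge F$ its mean obeys $0<\int t\,\de\underline F_a(t)\le\int t\,\de F(t)<\infty$; hence $\underline S^{(a)}$ is a genuine renewal process with positive finite mean increment, and $E[\underline\gamma^{(a)}(1)]=\sum_{k\ge0}P\bigl(\underline S_k^{(a)}-\underline S_0^{(a)}<1\bigr)$ is at most its (finite) renewal function evaluated at $1$. Taking $\hat u(a_\ast):=E[\underline\gamma^{(a_\ast+1)}(1)]\in(0,\infty)$ then finishes the proof. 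The delicate points are: keeping the renewal coupling valid across an entire unit-interval crossing of $S$ — the possible overshoot is precisely why one passes from $a_\ast$ to $a_\ast+1$ — and checking that the comparison renewal process has positive finite mean, so that $E_w[\gamma(1)]$ is finite uniformly in $w$. (Bruhn's original argument is organized differently, via the fast renewal process $\bar S^{(a_\ast)}$, and there the integrability condition \eqref{ic} enters to ensure that process has finite mean increment; with the slow process $\underline S^{(a)}$ used here, \eqref{ic} is not required for this particular lemma.)
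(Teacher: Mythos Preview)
Your argument is correct and follows essentially the same route as the paper's proof: decompose the sum over unit intervals and bound the expected number of visits to each interval by comparison with the slow renewal process $\underline S^{(a)}$; the paper does the comparison via tail probabilities $P_x(|\{t:S_t\in(n-1,n]\}|\ge k)\le P_0(\underline S^{(a)}_{k-1}\le 1)$ rather than via strong Markov and pathwise coupling, but the substance is the same. Your closing parenthetical is mistaken, however: Bruhn's proof (as reproduced in the paper's appendix) also uses $\underline S^{(a)}$, not $\bar S^{(a)}$, and does not invoke the integrability condition for this lemma --- it secures $E[\underline Y^{(a)}_t]>0$ by monotone convergence as $a\to-\infty$, whereas you get it directly (and for every $a$) from $\underline F_a(0)=0$.
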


To investigate also recurrences where the toll function $r$ is not bounded as it is for example in the case of the Wiener index, we complete the results of Bruhn by the following lemma and corollary.
\begin{lemma}\label{le-hauptteil}
It holds for all decreasing continuous functions $l:\R\to\R_+$ and any $d\in\R_+$
\begin{align*}
\lim_{a\to-\infty}E\left[\sum_{t=1}^{\bar\gamma^{(a)}(d)}l\left(\bar S^{(a)}_t-\bar S^{(a)}_0\right)\right]{}={}&\lim_{a\to-\infty}E\left[\sum_{t=1}^{\underline\gamma^{(a)}(d)}l\left(\underline S^{(a)}_t-\underline S^{(a)}_0\right)\right]\\
{}={}&E\left[\sum_{t=1}^{\tilde\gamma(d)}l\left(\tilde S_t-\tilde S_0\right)\right]<\infty.
\end{align*}
\end{lemma}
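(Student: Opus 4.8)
The plan is to work on the probability space where the three renewal walks $\bar S^{(a)}$, $\underline S^{(a)}$ and $\tilde S$ are all built from the common sequence $(U_t)$ and started at the common value $S_0$; since only the increments from the start enter the statement, we may take $S_0=0$. By the construction recalled above, for every $t$ we have the pathwise sandwich $\underline S_t^{(a)}-\underline S_0^{(a)}\le\tilde S_t-\tilde S_0\le\bar S_t^{(a)}-\bar S_0^{(a)}$, and as $a\to-\infty$ the left side increases to the middle one and the right side decreases to it, almost surely. I would begin with the finiteness: the increments of $\tilde S$ are i.i.d. and strictly positive, so there is $\delta>0$ with $p:=P(\tilde Y_1>\delta)>0$; since $\{\tilde S_t-\tilde S_0<d\}$ forces fewer than $d/\delta$ of the first $t$ increments to exceed $\delta$, $P(\tilde S_t-\tilde S_0<d)\le P(\mathrm{Bin}(t,p)<\lceil d/\delta\rceil)$ decays geometrically in $t$, so $E[\tilde\gamma(d)]=1+\sum_{t\ge1}P(\tilde S_t-\tilde S_0<d)<\infty$. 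As $l$ is nonnegative and decreasing and the walk has nonnegative increments, $\sum_{t=1}^{\tilde\gamma(d)}l(\tilde S_t-\tilde S_0)\le l(0)\,\tilde\gamma(d)$ is integrable; the same bound, applied to $\bar S^{(a)}$ and $\underline S^{(a)}$ (whose increment distribution functions $\bar F_a,\underline F_a$ satisfy $\bar F_a(0)=\underline F_a(0)=0$), shows that all three quantities in the lemma are finite, which gives the ``$<\infty$'' assertion.

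Next I would establish pathwise convergence of the outer sums to the middle one. Since $\bar S^{(a)}$ dominates $\tilde S$ and $\underline S^{(a)}$ is dominated by it, $\bar\gamma^{(a)}(d)\le\tilde\gamma(d)\le\underline\gamma^{(a)}(d)$. Setting $m:=\tilde\gamma(d)$, we have $\tilde S_{m-1}-\tilde S_0<d$, so $\bar S_{m-1}^{(a)}-\bar S_0^{(a)}\downarrow(\tilde S_{m-1}-\tilde S_0)<d$ forces $\bar\gamma^{(a)}(d)=m$ for all sufficiently negative $a$; and if $\tilde S_m-\tilde S_0>d$, then $\underline S_m^{(a)}-\underline S_0^{(a)}\uparrow(\tilde S_m-\tilde S_0)>d$ forces $\underline\gamma^{(a)}(d)=m$ eventually. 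Combined with the continuity of $l$ and with $\bar S_t^{(a)}-\bar S_0^{(a)}\to\tilde S_t-\tilde S_0$, $\underline S_t^{(a)}-\underline S_0^{(a)}\to\tilde S_t-\tilde S_0$, this gives $\sum_{t=1}^{\bar\gamma^{(a)}(d)}l(\bar S_t^{(a)}-\bar S_0^{(a)})\to\sum_{t=1}^{\tilde\gamma(d)}l(\tilde S_t-\tilde S_0)$ and likewise for $\underline S$, almost surely.

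To pass to expectations I would dominate. For the upper sum, the sandwich and the monotonicity of $l$ yield $\sum_{t=1}^{\bar\gamma^{(a)}(d)}l(\bar S_t^{(a)}-\bar S_0^{(a)})\le\sum_{t=1}^{\tilde\gamma(d)}l(\tilde S_t-\tilde S_0)$ for every $a$, so dominated convergence applies. For the lower sum, note it is monotone decreasing as $a\to-\infty$ (both the number of summands $\underline\gamma^{(a)}(d)$ and each summand $l(\underline S_t^{(a)}-\underline S_0^{(a)})$ decrease), hence for $a$ below any fixed $a_1$ it is dominated by $\sum_{t=1}^{\underline\gamma^{(a_1)}(d)}l(\underline S_t^{(a_1)}-\underline S_0^{(a_1)})\le l(0)\,\underline\gamma^{(a_1)}(d)$, which is integrable by the geometric estimate; dominated convergence again applies. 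Thus both expectations converge to $E[\sum_{t=1}^{\tilde\gamma(d)}l(\tilde S_t-\tilde S_0)]$, which is finite, as required.

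The step I expect to require the most care is the behaviour of the lower stopping time $\underline\gamma^{(a)}(d)$ at the boundary: were $\tilde S$ to hit the level $d$ exactly at time $m=\tilde\gamma(d)$, then $\underline S_m^{(a)}-\underline S_0^{(a)}$ could approach $d$ strictly from below for every $a$, so that $\underline\gamma^{(a)}(d)$ would tend to $m+1$ and a spurious summand $l(\tilde S_{m+1}-\tilde S_0)$ would survive. This exceptional event is null because the limit law $F$ is continuous in the situations of interest here (the Lebesgue density of the splitter $V$ carries over to $F$), so $\tilde S_m-\tilde S_0$ has a continuous distribution and a.s. differs from $d$; I would record this point explicitly. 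The remaining verifications are routine once the coupling and the geometric tail bound for first-passage times are in hand.
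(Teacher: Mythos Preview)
Your argument is correct and follows essentially the same route as the paper: the pathwise monotonicity coming from the quantile coupling, almost-sure convergence of the random sums to the $\tilde S$ sum, and then monotone/dominated convergence to pass to expectations. Two differences are worth recording. First, where the paper cites \citet{gut_88} and the elementary renewal theorem to obtain $E[\tilde\gamma(d)]<\infty$ and $E[\underline\gamma^{(a)}(d)]<\infty$, you give a self-contained geometric tail bound; this is more elementary and equally effective. Second, and more substantively, you isolate the one delicate point the paper passes over: for the lower process one needs $\underline\gamma^{(a)}(d)\to\tilde\gamma(d)$ a.s., which can fail on the event $\{\tilde S_{\tilde\gamma(d)}-\tilde S_0=d\}$ because $\underline S^{(a)}$ may approach $d$ strictly from below. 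Your resolution---that $F$ is absolutely continuous in the intended application, so this event is null---is correct, but note that it invokes a hypothesis not stated in the lemma itself; the paper's ``with the same arguments'' does not address this at all, so on this point your write-up is the more careful one.
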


\begin{proof}{}
First, we consider the sequence $(\bar S_t^{(a)})$.
By the construction we know that for each $s,t\in\N$ the mapping $a\mapsto\bar Y^{(a)}_s$ and thus the mapping $a\mapsto\bar S^{(a)}_t-\bar S^{(a)}_0$ are decreasing and converge almost surely to $\tilde Y_s$ and $\tilde S_t-\tilde S_0$ as $a\to-\infty$.
This yields that for $d\in\R$ the mapping $a\mapsto\bar\gamma^{(a)}(d)$ is increasing and bounded from above by $\tilde\gamma(d)$.
It is easy to see that $\bar\gamma^{(a)}(d)\to\tilde\gamma(d)$ almost surely as $a\to-\infty$.
Since $\bar\gamma^{(a)}(d)\in\N$ for all $a\in\R$ and $l$ is continuous, we obtain as $a\to-\infty$ almost surely
\[
\sum_{t=1}^{\bar\gamma^{(a)}(d)}l\left(\bar S^{(a)}_t-\bar S^{(a)}_0\right)\to\sum_{t=1}^{\tilde\gamma(d)}l\left(\tilde S^{(a)}_t-\tilde S^{(a)}_0\right).
\]
Furthermore, the left hand side is increasing as $a\to-\infty$ and
\[
E\left[\sum_{t=1}^{\tilde\gamma(d)}l\left(\tilde S^{(a)}_t-\tilde S^{(a)}_0\right)\right]\le l(0)E[\tilde\gamma(d)]
\]
where we use that $l$ is decreasing.
The positivity of $\tilde Y_s$ ensures by \citet[][Chapter II, Theorem 3.1]{gut_88} that $E[\tilde\gamma(d)]<\infty$ and the claim follows for the first sum.

With the same arguments, we have
\eqn\label{help}
\sum_{t=1}^{\underline\gamma^{(a)}(d)}l\left(\underline S^{(a)}_t-\underline S^{(a)}_0\right)\to\sum_{t=1}^{\tilde\gamma(d)}l\left(\tilde S^{(a)}_t-\tilde S^{(a)}_0\right)
\nqe
almost surely as $a\to-\infty$ and the left hand side is decreasing.
It is
\[
E\left[\sum_{t=1}^{\underline\gamma^{(a)}(d)}l\left(\underline S^{(a)}_t-\underline S^{(a)}_0\right)\right]\le l(0)E[\underline\gamma^{(a)}(d)].
\]
The monotone convergence theorem provides $\lim_{a\to-\infty}E[\underline Y_t^{(a)}]=E[\tilde Y_t]>0$.
Thus, $E[\underline Y_t^{(a)}]>0$ for $a\in\R$ small enough and the elementary renewal theorem \citep[see e.g.][Section II.4]{gut_88} implies $E[\underline\gamma^{(a)}(d)]<\infty$.
So, the claim follows from \eqref{help} by the monotone convergence theorem.
\end{proof}

Choosing $l(x)=\exp(-\alpha x)$ with $\alpha>0$ yields the following result.
\begin{korollar}\label{co-hauptteil}
For $\alpha,d>0$ there exists a constant $c\in\R$ such that for each $\epsi>0$ there exists $n_0\in\N$ with
\[
\frac 1{n^{\alpha}}E_{-\log n}\left[\sum_{t=0}^{\tau(-\log n+d)}\exp(-\alpha S_t)\right]\in(c-\epsi,c+\epsi)
\]
for all $n\ge n_0$.
\end{korollar}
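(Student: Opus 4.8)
The plan is to apply Lemma~\ref{le-hauptteil} to the decreasing, continuous, positive function $l(x):=\exp(-\alpha x)$, after rewriting the quantity on the left as a constant plus a sum over the Markov chain $(S_t)$ and then sandwiching it between the corresponding sums over the two renewal walks $\bar S^{(a)}$ and $\underline S^{(a)}$.

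\emph{Reduction.} Under $P_{-\log n}$ the chain starts in $S_0=-\log n$, so $\tau(-\log n+d)=\gamma(d)$, and
\[
\exp(-\alpha S_t)=n^{\alpha}\exp\!\big(-\alpha(S_t-S_0)\big)=n^{\alpha}\,l(S_t-S_0).
\]
Separating the summand $t=0$, which contributes $n^\alpha l(0)=n^\alpha$, we obtain
\[
\frac{1}{n^{\alpha}}\,E_{-\log n}\!\left[\sum_{t=0}^{\tau(-\log n+d)}\exp(-\alpha S_t)\right]=1+E_{-\log n}\!\left[\sum_{t=1}^{\gamma(d)}l(S_t-S_0)\right],
\]
so it suffices to show that the last expectation converges, as $n\to\infty$, to a finite limit $L$; the constant is then $c:=1+L$.

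\emph{Sandwiching.} Fix $a\in\R_-$ and let $n$ be large enough that $-\log n+d\le a$. For every $t\le\gamma(d)$ we then have $S_{t-1}<-\log n+d\le a$, so the coupling gives $\underline Y^{(a)}_t\le S_t-S_{t-1}\le\bar Y^{(a)}_t$; summing over $1,\ldots,t$ yields $\underline S^{(a)}_t-\underline S^{(a)}_0\le S_t-S_0\le\bar S^{(a)}_t-\bar S^{(a)}_0$ for all $t\le\gamma(d)$, hence $\bar\gamma^{(a)}(d)\le\gamma(d)\le\underline\gamma^{(a)}(d)$. Because $l$ is decreasing and nonnegative, these two facts combine to give the pointwise bound
\[
\sum_{t=1}^{\bar\gamma^{(a)}(d)}l\!\big(\bar S^{(a)}_t-\bar S^{(a)}_0\big)\le\sum_{t=1}^{\gamma(d)}l(S_t-S_0)\le\sum_{t=1}^{\underline\gamma^{(a)}(d)}l\!\big(\underline S^{(a)}_t-\underline S^{(a)}_0\big).
\]
Taking expectations, the outer two quantities are independent of $n$: they involve only the i.i.d.\ increments of the renewal walks and not the common starting value $S_0=-\log n$.

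\emph{Conclusion.} By Lemma~\ref{le-hauptteil} applied to $l$, the two bounding expectations both converge, as $a\to-\infty$, to $L:=E[\sum_{t=1}^{\tilde\gamma(d)}l(\tilde S_t-\tilde S_0)]<\infty$. Hence, given $\epsi>0$, first fix $a$ so that both lie in $(L-\epsi,L+\epsi)$, and then fix $n_0$ with $-\log n+d\le a$ for all $n\ge n_0$; for every such $n$ the previous display forces $E_{-\log n}[\sum_{t=1}^{\gamma(d)}l(S_t-S_0)]\in(L-\epsi,L+\epsi)$, and therefore the left-hand side of the claim lies in $(c-\epsi,c+\epsi)$ with $c=1+L$. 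The step needing the most care is the bookkeeping around the coupling: one has to check that the chain stays below the threshold $a$ up to time $\gamma(d)$ (so that the increment comparison is legitimate at each of those steps) and keep the chain of inequalities consistent — $l$ decreasing reverses the order between the three walks, while extending a sum to a later stopping time preserves it, and the two effects match up precisely because $\bar\gamma^{(a)}(d)\le\gamma(d)\le\underline\gamma^{(a)}(d)$.
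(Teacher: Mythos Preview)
Your proof is correct and follows essentially the same route as the paper: rewrite $\exp(-\alpha S_t)=n^{\alpha}l(S_t-S_0)$ with $l(x)=e^{-\alpha x}$, sandwich the $S$-sum between the $\bar S^{(a)}$- and $\underline S^{(a)}$-sums via the coupling, and invoke Lemma~\ref{le-hauptteil} to make the two bounds coincide as $a\to-\infty$. The only cosmetic differences are that you split off the $t=0$ term (hence $c=1+L$) and spell out the coupling bookkeeping that the paper compresses into ``by construction''; in particular your verification that $S_{t-1}\le a$ for all $t\le\gamma(d)$ and the resulting chain $\bar\gamma^{(a)}(d)\le\gamma(d)\le\underline\gamma^{(a)}(d)$ is exactly what underlies the paper's displayed inequality.
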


\begin{proof}{}
By construction we have for $-\log n+d\le a$
\begin{align*}
\sum_{t=0}^{\bar\gamma^{(a)}(d)}\exp(-\alpha(\bar S^{(a)}_t-\bar S^{(a)}_0)){}\le{}& \sum_{t=0}^{\gamma(d)}\exp(-\alpha(S_t-S_0)\\
\le{}& \sum_{t=0}^{\underline\gamma(d)}\exp(-\alpha(\underline S^{(a)}_t-\underline S^{(a)}_0)).
\end{align*}
For $\epsi>0$, Lemma \ref{le-hauptteil} provides $a_\ast\in\R$ such that for all $a<a_\ast$ we have
\[
\left|E\left[\sum_{t=0}^{\bar\gamma(d)}\exp\left(-\alpha \left(\bar S^{(a)}_t-\bar S^{(a)}_0\right)\right)\right]-E\left[\sum_{t=0}^{\underline\gamma(d)}\exp\left(-\alpha\left(\underline S^{(a)}_t-\underline S^{(a)}_0\right)\right)\right]\right|<\epsi.
\]
We choose $n_0$ such that $-\log n_0+d\le a_\ast$.
Since we have for $n\ge n_0$
\[
E_{-\log n}\left[\sum_{t=0}^{\tau(-\log n+d)}\exp(-\alpha S_t)\right]=n^\alpha E_{-\log n}\left[\sum_{t=0}^{\gamma(d)}\exp\left(-\alpha(S_t-S_0)\right)\right]
\]
the claim follows using Lemma \ref{le-hauptteil} once more.
\end{proof}

\section{Recurrences for the random split tree}\label{sec-splittree}
We consider a random split tree with the notation as introduced in Section \ref{sec-1} and set $\nu_n(\{k\}):=b\frac knP(I_{n,1}=k)+\frac {s_0}{n}\1_{\{k=n-s_0\}}$.
This function $\nu_n$ defines a probability measure on the set $\{0,\ldots,n-s_0\}$.
This is seen by summing up all values
\begin{align*}
 \sum_{k=0}^{n-s_0}\nu_n(\{k\}){}={}&b\frac 1nE[I_{n,1}]+\frac {s_0}{n}\\
={}&\frac{n-s_0}{n}+\frac{s_0}{n}\\
={}&1.
\end{align*}

For the rest of the paper, we consider the Markov chain $(S_t)_{t\in\N}$ from Section \ref{sec-bruhn} where the transition probabilities are given by this special choice of $\nu$.
In this section, we prove that for this choice the conditions of the Lemmata of the previous section are fulfilled.

\subsection{The distribution of the subtreesize}
When doing this, we frequently use the fact that the size of the first subtree rescaled properly converges.
\begin{lemma}\label{le-conv-I}
For $\epsi>0$ we have
\[
P\left(\left|\frac{I_{n,1}}{n}-V\right|\ge\epsi\right)\le 2\exp\left(-\frac{n\epsi^2}{4}\left(1+O\left(\frac 1n\right)\right)\right).
\]
In particular, this yields
\[
E\left[\left|\frac{I_{n,1}}{n}-V\right|\right]=O\left(n^{-\frac 13}\right).
\]
\end{lemma}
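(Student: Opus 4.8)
The plan is to prove the concentration inequality by conditioning on the splitter $V$ and applying a Chernoff bound for the binomial distribution, and then to deduce the $L_1$-bound by splitting the range of $\epsi$. Recall from \eqref{eq-distr-I} that, conditionally on $V=x$, the random variable $I_{n,1}$ has the distribution of $\mathrm{Bin}(\eta_n,x)+s_1$ where $\eta_n=n-s_0-bs_1$. First I would write
\[
P\left(\left|\frac{I_{n,1}}{n}-V\right|\ge\epsi\right)=\int_0^1 P\left(\left|\frac{\mathrm{Bin}(\eta_n,x)+s_1}{n}-x\right|\ge\epsi\right)\de P^V(x),
\]
and estimate the inner probability uniformly in $x\in[0,1]$. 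Since $s_0,s_1,b$ are fixed constants, $\frac{\mathrm{Bin}(\eta_n,x)+s_1}{n}-x=\frac{\mathrm{Bin}(\eta_n,x)-\eta_n x}{n}+\frac{s_1-(s_0+bs_1)x}{n}$, and the deterministic part is $O(1/n)$ uniformly in $x$; so for $n$ large the event $\{|I_{n,1}/n-x|\ge\epsi\}$ is contained in $\{|\mathrm{Bin}(\eta_n,x)-\eta_n x|\ge \tfrac{n\epsi}{2}\}$, say, or more precisely in $\{|\mathrm{Bin}(\eta_n,x)-\eta_n x|\ge n\epsi(1-O(1/n))\}$.

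Next I would apply the Chernoff--Hoeffding bound for sums of independent Bernoulli variables: $P(|\mathrm{Bin}(m,x)-mx|\ge t)\le 2\exp(-2t^2/m)$. With $m=\eta_n=n(1+O(1/n))$ and $t=n\epsi(1+O(1/n))$ this gives $2\exp(-2n^2\epsi^2(1+O(1/n))/n)=2\exp(-2n\epsi^2(1+O(1/n)))$, which is even a bit stronger than the claimed bound $2\exp(-\tfrac{n\epsi^2}{4}(1+O(1/n)))$; the stated weaker constant gives room to absorb all the $O(1/n)$ corrections cleanly, so the bound holds as written once $n$ is large, and (adjusting constants) for all $n$. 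Integrating the uniform estimate over $P^V$ preserves it, proving the first claim.

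For the second claim I would use the layer-cake / tail-integration formula together with the just-proved bound. Write $Z_n:=|I_{n,1}/n-V|$, which is bounded by $1$ (indeed by a constant). Then
\[
E[Z_n]=\int_0^\infty P(Z_n\ge\epsi)\,\de\epsi\le \delta_n+\int_{\delta_n}^{\infty}2\exp\left(-\tfrac{n\epsi^2}{4}(1+O(1/n))\right)\de\epsi
\]
for any threshold $\delta_n>0$. Choosing $\delta_n=n^{-1/3}$, the integral over $\epsi\ge n^{-1/3}$ is dominated by $2\int_{n^{-1/3}}^\infty \exp(-cn\epsi^2)\,\de\epsi$ with $c>0$ fixed; substituting $\epsi=u/\sqrt n$ this is $\tfrac{2}{\sqrt n}\int_{n^{1/6}}^\infty e^{-cu^2}\de u$, which decays faster than any polynomial and is in particular $o(n^{-1/3})$. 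Hence $E[Z_n]\le n^{-1/3}+o(n^{-1/3})=O(n^{-1/3})$.

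The only mildly delicate point is bookkeeping the constants: one must make sure the deterministic shift $\frac{s_1-(s_0+bs_1)x}{n}$ and the replacement of $\eta_n$ by $n$ are absorbed into the $1+O(1/n)$ factor, which is exactly why the statement is phrased with that slack and with the comfortable constant $1/4$ rather than the sharp $2$. Everything else is the standard Chernoff estimate and tail integration, so I expect no real obstacle.
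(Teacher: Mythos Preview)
Your proposal is correct and follows essentially the same approach as the paper: condition on $V$, apply a concentration inequality for the binomial (the paper uses Bernstein's inequality, you use Chernoff--Hoeffding, which even gives a sharper constant), and for the $L_1$-bound split at the threshold $n^{-1/3}$. The only cosmetic difference is that the paper bounds the expectation by writing $E[Z_n]\le n^{-1/3}+P(Z_n>n^{-1/3})$ (using $Z_n\le 1$) rather than integrating the tail, but this is the same idea.
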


\begin{proof}{}
Starting from the distribution of $I_{n,1}$ given in \eqref{eq-distr-I} we obtain by Bernstein's inequality
\begin{align*}
P\left(\left|\frac{I_{n,1}}{n}-V\right|\ge\epsi\right){}={}&\int_0^1 P\left(\left|\mathrm{Bin}(\eta_n,x)-nx\right|\ge n\epsi\right)\de P^V(x)\\
\le{}&2\exp\left(-\frac{n\epsi^2}{4}\left(1+O\left(\frac 1n\right)\right)\right).
\end{align*}
Since it is $|I_{n,1}/n-V|\le 1$, this yields for the expectation
\begin{align*}
E\left[\left|\frac{I_{n,1}}{n}-V\right|\right]{}={}&E\left[\left(\1_{\left\{\left|\frac{I_{n,1}}{n}-V\right|\le n^{-\frac 13}\right\}}+\1_{\left\{\left|\frac{I_{n,1}}{n}-V\right|> n^{-\frac 13}\right\}}\right)\left|\frac{I_{n,1}}{n}-V\right|\right]\\
\le{}&n^{-\frac 13}+2\exp\left(-\frac{n^{1/3}}{4}\left(1+O\left(\frac 1n\right)\right)\right)\\
={}&O\left(n^{-\frac 13}\right).
\end{align*}
\end{proof}

At this point, we prove some asymptotic expansions needed later.
\begin{lemma}\label{le-exp-I}
For the size of the first subtree $I_{n,1}$ in a random split tree with splitting distribution $V$ it holds
\[
E[I_{n,1}^2]=E[V^2]n^2+o(n^2),
\]
\[
E[I_{n,1}\log I_{n,1}]=\frac 1b n\log n+E[V\log V]n+o(n)
\]
and
\[
E[I_{n,1}^2\log I_{n,1}]=E[V^2]n^2\log n+E[V^2\log V]n^2+o(n^2).
\]
\end{lemma}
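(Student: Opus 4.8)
The plan is to reduce all three expansions to the behaviour of the rescaled subtree size $I_{n,1}/n$ around the splitter $V$, using the concentration estimate of Lemma~\ref{le-conv-I}. Write, with the convention $0\log 0=0$,
\[
I_{n,1}^{\,j}\log I_{n,1}=I_{n,1}^{\,j}\log n+n^{j}\phi_j\!\left(\tfrac{I_{n,1}}{n}\right),\qquad \phi_1(x):=x\log x,\quad \phi_2(x):=x^2\log x,
\]
so that for $j=1,2$ the task splits into evaluating $E[I_{n,1}^{\,j}]\log n$ and showing $n^{j}E[\phi_j(I_{n,1}/n)]=n^{j}E[\phi_j(V)]+o(n^{j})$. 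For the first identity I would instead condition directly on $V$: since $I_{n,1}-s_1$ is $\mathrm{Bin}(\eta_n,V)$ given $V$, one has $E[I_{n,1}^2\mid V]=\eta_nV(1-V)+(\eta_nV+s_1)^2$, and taking expectations together with $\eta_n=n+O(1)$ gives $E[I_{n,1}^2]=\eta_n^2E[V^2]+O(n)=E[V^2]n^2+O(n)$. This already yields the stated $o(n^2)$, and the sharper $O(n)$ error term will be convenient in the third identity.

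For the second identity, first $E[I_{n,1}\log n]=\log n\cdot E[I_{n,1}]$; since $bE[V]=E[\sum_{k=1}^bV_k]=1$ we have $E[I_{n,1}]=\eta_nE[V]+s_1=(n-s_0)/b$, hence $E[I_{n,1}\log n]=\tfrac1b n\log n+o(n)$. It remains to prove $E[\phi_1(I_{n,1}/n)-\phi_1(V)]=o(1)$, and here the direct $L^1$-transfer fails because $\phi_1$ is continuous and bounded ($|\phi_1|\le 1/e$) on $[0,1]$ but not Lipschitz at $0$. I would therefore split on $A_n:=\{|I_{n,1}/n-V|\le n^{-1/3}\}$: on $A_n^c$ the integrand is at most $2/e$ while $P(A_n^c)$ is exponentially small by Lemma~\ref{le-conv-I}; on $A_n$ I distinguish the cases $V\ge 2n^{-1/3}$ — where $I_{n,1}/n$ and $V$ both lie in $[n^{-1/3},1]$, so the mean value theorem and $|\phi_1'(\xi)|=|\log\xi+1|\le\tfrac13\log n$ on that interval give $|\phi_1(I_{n,1}/n)-\phi_1(V)|\le\tfrac13 n^{-1/3}\log n$ — and $V<2n^{-1/3}$ — where $I_{n,1}/n<3n^{-1/3}$, $|\phi_1|$ is increasing near $0$, and hence $|\phi_1(I_{n,1}/n)-\phi_1(V)|\le 2n^{-1/3}\log n$ for $n$ large. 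Altogether $E[|\phi_1(I_{n,1}/n)-\phi_1(V)|]=O(n^{-1/3}\log n)$, and multiplying by $n$ gives $o(n)$, which combines with the $\tfrac1b n\log n$ term to the claim.

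For the third identity, $E[I_{n,1}^2\log n]=\log n\cdot E[I_{n,1}^2]=E[V^2]n^2\log n+O(n\log n)$ by the first part, so it remains to treat $n^2E[\phi_2(I_{n,1}/n)]$. Now $\phi_2(x)=x^2\log x$ \emph{is} Lipschitz on $[0,1]$: it extends continuously by $\phi_2(0)=0$, and $\phi_2'(x)=x(2\log x+1)$ satisfies $|\phi_2'|\le 1$ on $(0,1]$; hence $E[|\phi_2(I_{n,1}/n)-\phi_2(V)|]\le E[|I_{n,1}/n-V|]=O(n^{-1/3})$ directly from Lemma~\ref{le-conv-I}, so $n^2E[\phi_2(I_{n,1}/n)]=E[V^2\log V]n^2+O(n^{5/3})$. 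Adding the two contributions yields $E[I_{n,1}^2\log I_{n,1}]=E[V^2]n^2\log n+E[V^2\log V]n^2+o(n^2)$. The only genuinely delicate point in the whole lemma is the second expansion: since $x\mapsto x\log x$ is not Lipschitz at $0$, the $L^1$-transfer breaks down precisely on the event that the first subtree is small, and one must play off the logarithmically weakened local Lipschitz bound on the bulk event $A_n$ against the exponential tail bound of Lemma~\ref{le-conv-I} on $A_n^c$; the exponent $1/3$ in $A_n$ is chosen so that both resulting error terms are $o(n)$.
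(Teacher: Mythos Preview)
Your proof is correct and uses the same decomposition as the paper, namely $I_{n,1}^{k}\log I_{n,1}=I_{n,1}^{k}\log n+n^{k}\phi_k(I_{n,1}/n)$ together with the known asymptotics for $E[I_{n,1}^{k}]$. Where you differ is in the treatment of the term $E[\phi_k(I_{n,1}/n)]$: the paper simply observes that $x\mapsto x^k\log x$ is \emph{bounded} on $[0,1]$ and that $I_{n,1}/n\to V$ in probability by Lemma~\ref{le-conv-I}, so bounded convergence gives $E[\phi_k(I_{n,1}/n)]\to E[V^k\log V]$ in one line for both $k=1,2$. Your more elaborate argument---Lipschitz for $k=2$, and for $k=1$ the truncation on $A_n$ with the local Lipschitz bound $|\phi_1'|\lesssim\log n$ on $[n^{-1/3},1]$---is not needed for the bare $o(1)$, but it does yield explicit rates (roughly $O(n^{-1/3}\log n)$ for $k=1$ and $O(n^{-1/3})$ for $k=2$) that the paper's soft argument does not provide. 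In particular, your remark that ``the direct $L^1$-transfer fails'' is accurate only if one insists on going through $E[|I_{n,1}/n-V|]$; the paper sidesteps this entirely via boundedness.
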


\begin{proof}{}
It is
\begin{align}\label{exp-I-1}
E[I_{n,1}^2]{}={}&\int_0^1E[\mathrm{Bin}(\eta_n,x)^2]\de P^V(x)\notag\\
{}={}&\int_0^1(\eta_nx(1-x)+\eta_n^2x^2)\de P^V(x)\notag\\
={}&E[V^2]n^2+o(n^2).
\end{align}
Furthermore, we have by Lemma \ref{le-conv-I} $I_{n,1}/n\to V$ in probability.
Since $x\mapsto x^k\log x$ is bounded on the interval $[0,1]$, we obtain for $k=1,2$
\[
E\left[\frac{I_{n,1}^k}{n^k}\log \frac{I_{n,1}}{n}\right]\to E[V^k\log V].
\]
This implies
\[
E\left[I_{n,1}^k\log \frac{I_{n,1}}{n}\right]=E[V^k\log V]n^k+o(n^k).
\]
On the other hand we have
\[
E\left[I_{n,1}^k\log \frac{I_{n,1}}{n}\right]=E\left[I_{n,1}^k\log I_{n,1}\right]-E\left[I_{n,1}^k\right]\log n.
\]
The claims follow with result \eqref{exp-I-1} since we have $E[I_{n,1}]=(n-s_0)/b$.
\end{proof}

\subsection{The Markov chain for the random split tree}
Now, we consider the Markov chain from Section \ref{sec-bruhn} with the transition probabilities $\nu_n(\{k\})=b\frac knP(I_{n,1}=k)+\frac {s_0}{n}\1_{\{k=n-s_0\}}$.
\begin{lemma}\label{le-5.2}
The process $(S_t)_{t\in\N_0}$ is an AR-process and the corresponding set of distributions $\{F_x\}$ fulfills the integrability condition.
\end{lemma}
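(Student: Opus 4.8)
The plan is to verify the three parts of the definition of an AR-process for the Markov chain $(S_t)$ with transition measure $\nu_n(\{k\})=b\frac knP(I_{n,1}=k)+\frac{s_0}{n}\1_{\{k=n-s_0\}}$, and then the integrability condition. Recall that the state space is $\cE=\{-\log n:n\in\N\}\cup\{1\}$, which has no lower bound, and that the increments satisfy $Y_t=S_t-S_{t-1}>0$ because from state $-\log n$ the chain moves to $-\log k$ with $k\le n-s_0<n$ (the atom at $k=0$ sending the chain to the absorbing state $1$ has probability $\nu_n(\{0\})=\frac{s_0}{n}\1_{\{n=s_0\}}$, so for $n>s_0$ it is not reached, and anyway $1>-\log n$). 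So positivity of the increments is immediate. The bulk of the work is the distributional convergence $F_x\to F$ as $x\to-\infty$, i.e., with $x=-\log n$, the convergence in distribution of $-\log(I_{n,1})-(-\log n)=-\log(I_{n,1}/n)$ under the size-biased law $\widetilde P(I_{n,1}=k)\propto k\,P(I_{n,1}=k)$ (plus the vanishing $s_0/n$ correction term), as $n\to\infty$.

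First I would identify the limit law $F$. By Lemma \ref{le-conv-I} we have $I_{n,1}/n\to V$ in probability; under the size-biasing by $I_{n,1}/n$ the limit should be the distribution $\widetilde P^V$ with density proportional to $v\,f_V(v)$, i.e. $d\widetilde P^V(v)=\frac{v}{E[V]}f_V(v)\,dv$ — here one uses $E[I_{n,1}]=(n-s_0)/b$ and $bE[V]=1$, so the normalisation is consistent; thus $F$ is the distribution function of $-\log \widetilde V$ where $\widetilde V$ has law $\widetilde P^V$. To make the size-biased convergence rigorous I would take a bounded continuous test function $g$ and write
\[
E_{-\log n}[g(S_1-S_0)]=\sum_{k=1}^{n-s_0} b\tfrac kn P(I_{n,1}=k)\,g\bigl(-\log\tfrac kn\bigr)+\tfrac{s_0}{n}g(1+\log n),
\]
recognise the main sum as $b\,E\bigl[\tfrac{I_{n,1}}{n}\,g(-\log(I_{n,1}/n))\1_{\{I_{n,1}\ge1\}}\bigr]$, and pass to the limit using $I_{n,1}/n\to V$ in probability together with uniform integrability of $\tfrac{I_{n,1}}{n}g(-\log(I_{n,1}/n))$ (it is bounded by $\|g\|_\infty$ on $\{I_{n,1}\ge1\}$ since $I_{n,1}/n\le1$, and the contribution of $\{I_{n,1}=0\}$ is negligible by Lemma \ref{le-conv-I}); the correction term $\tfrac{s_0}{n}g(1+\log n)\to 0$. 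This gives $E_{-\log n}[g(S_1-S_0)]\to b\,E[V g(-\log V)]=E_{\widetilde V}[g(-\log\widetilde V)]=\int g\,dF$, which is the required convergence in distribution. Finally I must check $0<\int t\,dF(t)<\infty$, i.e. $0<E[-\log\widetilde V]=bE[-V\log V]=\mu<\infty$: finiteness holds because $v\mapsto v\log v$ is bounded on $[0,1]$, and strict positivity holds because the general assumption gives $F_V(x)<1$ for $x<1$, hence $P(\widetilde V<1)>0$ so $-\log\widetilde V$ is not a.s. zero and $\mu>0$. (Note $\mu$ is exactly the constant appearing in Theorems \ref{th} and \ref{th-wiener}.)

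For the integrability condition it suffices, by the remark following its definition (valid since $(S_t)$ is an AR-process), to show $\int x\,d\bar F_a(x)<\infty$ for some $a\in\R_-$, where $\bar F_a(t)=\inf_{x\le a}F_x(t)$; equivalently $\sup_{n\ge N}E_{-\log n}[S_1-S_0]<\infty$ for some $N$ together with a uniform tail bound, i.e. it is enough to bound $\sup_{n\ge N} E_{-\log n}[S_1-S_0]=\sup_{n\ge N}\bigl(b\,E[-\tfrac{I_{n,1}}{n}\log(I_{n,1}/n)\1_{\{I_{n,1}\ge1\}}]+\tfrac{s_0}{n}(1+\log n)\bigr)$. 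The first term is bounded uniformly in $n$ since $-v\log v\le e^{-1}$ on $[0,1]$, and the second tends to $0$; so the expectations of the increments are uniformly bounded, and since $\bar F_a$ concentrates on $\R_+$ (increments are positive) this is exactly the integrability condition in the form \eqref{ic}. I would actually phrase this via the coupling description: $\bar F_a^{-1}\circ U_t \le$ some fixed integrable variable, or more directly just cite the uniform bound on $E_{-\log n}[S_1-S_0]$ and the convergence $\int x\,d\bar F_a(x)\to\int x\,dF(x)=\mu$ as $a\to-\infty$ by monotone/dominated convergence.

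The main obstacle is making the size-biasing limit airtight: one has convergence in probability of $I_{n,1}/n$ to $V$, but the transition kernel reweights by $k=I_{n,1}$, so one genuinely needs a uniform-integrability argument (cleanly supplied by the bound $|\tfrac{I_{n,1}}{n}g(-\log(I_{n,1}/n))|\le\|g\|_\infty$ on $\{I_{n,1}\ge1\}$ and the exponentially small mass on $\{I_{n,1}=0\}$ from Lemma \ref{le-conv-I}) rather than a naive continuous-mapping argument, and one must keep careful track of the $s_0/n$ atom and the absorbing state $1$ throughout so that the limiting distribution function $F$ is correctly the law of $-\log\widetilde V$ and not something contaminated by the boundary terms.
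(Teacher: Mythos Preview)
Your argument for the convergence $F_x\Rightarrow F$ via bounded test functions is a valid alternative to the paper's direct computation of the distribution function (the paper writes $F_{-\log n}(y)=bE\bigl[\tfrac{I_{n,1}}{n}\1_{\{-\log(I_{n,1}/n)\le y\}}\bigr]+o(1)$ and passes to the limit by dominated convergence and Lemma~\ref{le-conv-I}); both routes give the limit $F(y)=bE[V\1_{\{-\log V\le y\}}]$ with $\int t\,dF(t)=-bE[V\log V]=\mu\in(0,\infty)$.

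There is, however, a genuine gap in your treatment of the integrability condition. You claim that a uniform bound on the means, $\sup_{n\ge N}E_{-\log n}[S_1-S_0]<\infty$, is ``exactly the integrability condition in the form \eqref{ic}''. It is not: what is needed is $\int_0^\infty(1-\bar F_a(t))\,dt=\int_0^\infty\sup_{x\le a}(1-F_x(t))\,dt<\infty$, and the supremum cannot in general be pulled inside the integral. A counterexample: let $F_n$ place mass $1/n$ at the point $n$ and mass $1-1/n$ at $0$; every $F_n$ has mean $1$, yet $1-\inf_nF_n(t)=1/(\lfloor t\rfloor+1)$ for $t\ge 0$, which is not integrable. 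Your fallback via ``$\int x\,d\bar F_a(x)\to\int x\,dF(x)$ by monotone/dominated convergence'' is circular: dominated convergence requires an integrable majorant, and monotone convergence for the decreasing family $a\mapsto\int(1-\bar F_a)$ needs a finite starting value --- precisely what is to be shown. The paper supplies the missing ingredient, a uniform \emph{tail} bound: on $\{-\log(I_{n,1}/n)>y\}$ one has $I_{n,1}/n<e^{-y}$, so $1-F_{-\log n}(y)\le bE\bigl[\tfrac{I_{n,1}}{n}\1_{\{I_{n,1}/n<e^{-y}\}}\bigr]\le be^{-y}$ uniformly in $n$, whence $\int_0^\infty(1-\bar F_a(t))\,dt\le b<\infty$. (Two minor slips that do not affect the outcome: the $\tfrac{s_0}{n}$ atom in $\nu_n$ sits at $k=n-s_0$, giving increment $\log\tfrac{n}{n-s_0}$ rather than $1+\log n$; and $\mu>0$ follows simply from $V$ having a density, hence $-V\log V>0$ a.s., rather than from the condition $F_V(x)<1$.)
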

\begin{proof}{}
Since $\nu_n$ is a probability measure on the set $\{0,\ldots,n-s_0\}$ we have $Y_t>0$ for all $t$.
For $x=-\log n$ we have by dominated convergence and Lemma \ref{le-conv-I} for any $y\in\R$
\begin{align*}
F_x(y){}={}&P(Y_1\le y\mid S_0=x)\\
={}&\sum_{k\in \N:-\log\frac kn\le y}\nu_n(\{k\})\\
={}&\sum_{k\in \N:-\log\frac kn\le y}b\frac kn P(I_{n,1}=k)+\frac{s_0}{n}\1_{\{n-s_0\ge e^{-y}n\}}\\
={}&bE\left[\frac {I_{n,1}}{n}\1_{\{-\log(I_{n,1}/n)\le y\}}\right]+\frac{s_0}{n}\1_{\{n-s_0\ge e^{-y}n\}}\\
\xrightarrow{n\to\infty}{}&bE[V\1_{\{-\log V\le y\}}]=:F(y).
\end{align*}
Moreover, we obtain with Fubini's Theorem
\begin{align*}
\int_0^\infty t\,\de F(t){}={}&\int_0^\infty(1-F(t))\,\de t\\
={}&\int_0^\infty bE\left[V\1_{\{-\log V>t\}}\right]\,\de t\\
={}&-bE[V\log V].
\end{align*}
This yields $0<\int t\,\de F(t)<\infty$.

It remains to show the integrability condition, which means
\[
\int t\,\de\bar F_a(t)<\infty
\]
for an $a\in\R$ and $\bar F_a(t):=\inf_{x\le a}F_x(t)$.
Using again Fubini's Theorem we obtain
\begin{align*}
\int t\,\de \bar F_a(t){}={}&\int\int_0^\infty\1_{[0,t]}(y)\,\de y\de\bar F_a(t)\\
={}&\int_0^\infty\int \1_{[y,\infty)}(t)\,\de\bar F_a(t)\de y.
\end{align*}
Since 
\[
\int \1_{[y,\infty)}(t)\,\de\bar F_a(t)=\lim_{z\to\infty}\bar F_a(z)-\bar F_a(y)\le 1-\bar F_a(y)
\]
it follows for $a=-\log m$
\begin{align*}
\int t\,\de \bar F_a(t){}\le{}&\int_0^\infty\sup_{x\le a}(1-F_x(y))\,\de y\\
\le{}&\int_0^\infty b\sup_{n\ge m}\underbrace{E\left[\frac{I_{n,1}}{n}\1_{\{-\log(I_{n,1}/n)>y\}}\right]}_{\le e^{-y}}\,\de y\\
\le{}&\int_0^\infty be^{-y}\,\de y\\
<{}&\infty.
\end{align*}
\end{proof}

\begin{lemma}\label{le-tv}
The process $(S_t)_{t\in\N}$ fulfills the assumptions of Lemma \ref{le-roesler}.
\end{lemma}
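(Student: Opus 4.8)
The goal is to verify the two conditions of Lemma \ref{le-roesler} for the Markov chain $(S_t)$ associated with $\nu_n(\{k\})=b\frac knP(I_{n,1}=k)+\frac{s_0}{n}\1_{\{k=n-s_0\}}$. By Lemma \ref{le-5.2} the process is already known to be an AR-process fulfilling the integrability condition and to have the discrete state space $\cE=\{-\log n:n\in\N\}\cup\{1\}$, so only \eqref{cond-tv} remains. The plan is to handle the total variation bound first and the second (control of the overshoot below $K$) afterwards, both being consequences of the smoothness of the splitting density $f_V$ together with the concentration estimate of Lemma \ref{le-conv-I}.

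\textbf{The total variation bound.} For $x=-\log n$ and $y=-\log m$ with $n,m$ large and $|x-y|\le K$ (so $m/n$ bounded above and below), the law $P_x^{S_1}$ is, up to the negligible atom of mass $s_0/n$ at the state $-\log(n-s_0)$, the pushforward under $k\mapsto-\log(k/n)$ of the subprobability weights $b\frac knP(I_{n,1}=k)$. The first step is to show this measure is close in total variation to the pushforward of the size-biased law $bvf_V(v)\,\de v$ (more precisely, its discretised version on the grid $\{-\log(k/n)\}$), with an error tending to $0$ uniformly in $n\ge$ some threshold; here one uses \eqref{eq-distr-I}, the local limit behaviour of the binomial $\mathrm{Bin}(\eta_n,\cdot)$, and the general assumption that $V$ has a Lebesgue density. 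Once both $P_x^{S_1}$ and $P_y^{S_1}$ are within $\epsi/4$ (say) of absolutely continuous measures with densities that are themselves close in $L^1$ — because the limiting density $bvf_V(v)$ does not depend on $n$ — one concludes $d_\mathrm{TV}(P_x^{S_1},P_y^{S_1})\le 2(1-\epsi)$ for a suitable $\epsi>0$, for all $x,y\le x_0$ with $x_0$ sufficiently negative and $|x-y|\le K$. Equivalently, it suffices to exhibit a single point $z_0$ and a constant $\delta>0$ such that $P_x^{S_1}$ puts mass at least $\delta$ on a fixed interval around $z_0$ for all $x\le x_0$; this is where the condition $F_V(x)<1$ for all $x<1$ is used, since it guarantees $P(V\le 1-\eta)>0$ for every $\eta>0$, hence the size-biased law $bvf_V(v)\,\de v$ assigns positive mass to the region $\{-\log v\in[\,\eta',K\,]\}$ away from $0$, and this mass survives the discretisation and the binomial smoothing uniformly in $n$.

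\textbf{The overshoot condition.} The second half of \eqref{cond-tv} requires $\liminf_{x_0\to-\infty}\inf_{z<y\le x_0}P_z(S_{\tau(y)}-y\le K)>0$, i.e.\ that starting from any $z$ below $y$, the chain has a uniformly positive chance of landing within $K$ of its first passage above $y$. The step increments $Y_t$ have limiting law $F(\de t)=bE[V\,\1_{\{-\log V\in\de t\}}]$, which has positive mass on every neighbourhood of $0$ precisely because $F_V(x)<1$ for $x<1$ forces $P(-\log V<\eta)>0$ for all $\eta>0$; thus, with positive probability, the chain makes a very small jump from some state just below $y$ and overshoots by less than $K$. To make this uniform in $z$ I would use a renewal-type argument: couple $(S_t)$ with the lower renewal process $\underline S^{(a)}$ from Section \ref{sec-bruhn} (whose increments $\underline Y^{(a)}_t=\underline F_a^{-1}\circ U_t$ are i.i.d.\ and, for $a$ small enough, have positive probability of being $\le K$ while $S_{t-1}\le a$), invoke the fact that $\underline Y^{(a)}_t\uparrow\tilde Y_t$ with $\tilde Y_t$ having the limiting distribution $F$, and conclude that the first-passage overshoot of $S$ above $y$ is stochastically dominated by that of a genuine renewal process whose overshoot distribution is tight and places uniformly positive mass on $[0,K]$ (for $K$ large enough) — this is a standard consequence of the elementary renewal theorem as in \citet[Chapter II]{gut_88}. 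Taking $K$ large enough to make this renewal overshoot probability positive and then $x_0$ sufficiently negative that the coupling with $\underline S^{(a)}$ is valid below $x_0$ yields the desired uniform lower bound.

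\textbf{Main obstacle.} The routine parts are the binomial estimates and the passage to the limiting densities; the delicate point is obtaining the total variation bound \emph{uniformly} over the two-parameter family $x,y\le x_0$, $|x-y|\le K$, rather than merely pointwise in the limit $x\to-\infty$. The cleanest route, which I would adopt, is to avoid comparing $P_x^{S_1}$ and $P_y^{S_1}$ to each other directly and instead show the stronger uniform statement that each $P_x^{S_1}$ charges a common fixed interval with mass bounded below by a fixed $\delta>0$; this immediately gives $d_\mathrm{TV}\le 2-\delta$ and is where the general assumption ($f_V$ exists, $F_V<1$ on $[0,1)$) does all the work. Everything else then assembles from the AR-property and integrability condition already established in Lemma \ref{le-5.2}.
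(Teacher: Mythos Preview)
Your plan has the right shape, but there is a genuine gap in the total variation step. You write that it suffices to show each $P_x^{S_1}$ charges a common fixed interval with mass at least $\delta$, and that ``this immediately gives $d_{\mathrm{TV}}\le 2-\delta$''. That implication is false for discrete measures: two atomic measures can each put all their mass inside the same interval $I$ yet be mutually singular (take point masses at distinct atoms in $I$), in which case $d_{\mathrm{TV}}=2$. What is actually needed is a lower bound on the overlap $\sum_{k}\min\{\nu_n(\{k\}),\nu_m(\{k\})\}$ at the level of individual atoms $-\log k$, not merely at the level of an interval. Your first-stated approach via a local limit comparison to the size-biased density $bvf_V(v)$ points in the right direction, but the discretised targets live on different grids (the grid spacing depends on $n$), so the comparison must be carried out pointwise in $k$.

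The paper does exactly this pointwise work: using the Lebesgue density assumption and the Lebesgue--Besicovitch differentiation theorem it finds an interval $(\alpha,\beta)\subset(0,1)$ on which $f_V\ge\epsi_1>0$, and then via Stirling's formula shows that for every $k\in[\tilde\alpha n+s_1,\tilde\beta n+s_1]$ and every $l\in[e^{-K}n,n]$ the integral $\int_0^1\binom{\eta_l-1}{k-s_1-1}z^{k-s_1}(1-z)^{\eta_l-k+s_1}\,\de P^V(z)$ is at least $\tfrac12\epsi_1\tilde\alpha/(n+1)$. Summing over the $\sim(\tilde\beta-\tilde\alpha)n$ values of $k$ gives a uniform positive lower bound on the atomic overlap, hence the desired bound $d_{\mathrm{TV}}<2(1-\epsi)$. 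This is the step your sketch is missing.

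For the overshoot condition your renewal-coupling plan is more elaborate than necessary. The paper simply observes that the overshoot is dominated by the last increment: $S_{\tau(y)}-y\le S_{\tau(y)}-S_{\tau(y)-1}$, and since $S_{\tau(y)-1}<y$ one has
\[
P_z(S_{\tau(y)}-y\le K)\;\ge\;\inf_{w<y}F_w(K)\;=\;\bar F_y(K)\;\xrightarrow{y\to-\infty}\;bE\bigl[V\1_{\{V\ge e^{-K}\}}\bigr],
\]
which is strictly positive by the general assumption $F_V(x)<1$ for all $x<1$. No coupling with $\underline S^{(a)}$ or renewal overshoot theory is required.
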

\begin{proof}{}
In the previous proof we have already shown that $(S_t)_{t\in\N}$ is an AR-process, which fulfills the integrability condition.
The state space $\cE=\{-\log n\mid n\in\N\}\cup\{1\}$ is discrete.
It remains to show conditions \eqref{cond-tv}.
Let $x=-\log n$ and $y=-\log m$ with $m<n$.
It is 
\eqn\label{tv-dist}
d_\mathrm{TV}\left(P_x^{S_1},P_y^{S_1}\right)=2-2\sum_{z\in E}\min\{P_x(S_1=z),P_y(S_1=z)\}.
\nqe
We will show that there exists $0<\tilde\alpha<\tilde\beta<1$ such that for $n$ large enough
\eqn\label{anfang_tv}
0<\sum_{k=\lceil \tilde\alpha n\rceil+s_1}^{\lfloor\tilde\beta n\rfloor+s_1}\min\left\{\int_0^1\binom{\eta_l-1}{k-s_1-1}z^{k-s_1}(1-z)^{\eta_l-k+s_1}\de P^V(z)\mid l=n,m\right\}.
\nqe
For $k=cn+o(n)$ with $c\in(0,1)$ and $n\to\infty$ we have
\begin{align*}
\lefteqn{P_{x}(S_1=-\log k)}\qquad\\
={}&b\frac knP(I_{n,1}=k)+\frac{s_0}{n}\1_{\{k=n-s_0\}}\\
={}&b\frac k{k-s_1}\frac{\eta_n}{n}\int_0^1\frac{k-s_1}{\eta_n}P(\mathrm{Bin}(\eta_n,z)=k-s_1)\de P^V(z)+\frac{s_0}{n}\1_{\{k=n-s_0\}}\\
={}&(1+o(1))b\int_0^1\binom{\eta_n-1}{k-s_1-1}z^{k-s_1}(1-z)^{\eta_n-k+s_1}\de P^V(z)+o(1).
\end{align*}
Hence, inequality \eqref{anfang_tv} and equation \eqref{tv-dist} will imply
\[
d_\mathrm{TV}\left(P_x^{S_1},P_y^{S_1}\right)<2-2\epsi
\]
for some $\epsi>0$.
The condition $|x-y|\le K$ is equivalent to $m\ge e^{-K}n$.

By the general assumption, the distribution of $V$ has a Lebesgue density $f_V$.
Thus, there exists $\tilde z\in(0,1)$ with $f_V(\tilde z)>0$.
Theorem 3 in Section 1.7.2 of \citet{evans_gariepy_92} (which is a Corollary from the Lebesgue-Besicovitch Differentiation Theorem) implies that we can find a non-empty interval $(\alpha,\beta)\subset(0,1)$ and $\epsi_1>0$  such that $\lambda(\{z\in(\alpha,\beta)\mid f_V(z)<\epsi_1\})=0$ with $\lambda$ the Lebesgue measure.
Now, we can choose some $\epsi_2>0$ and $K>0$ with $\tilde \alpha:=\alpha+\epsi_2<e^{-K}(\beta-\epsi_2)=:\tilde \beta$.

We will show that for $n$ large enough, for all $k\in[\tilde\alpha n+s_1,\tilde\beta n+s_1]\cap\N$ and for all $l\in[e^{-K}n,n]\cap\N$  it holds
\[
\int_0^1\binom{\eta_l-1}{k-s_1-1}z^{k-s_1}(1-z)^{\eta_l-k+s_1}\de P^V(z)\ge \frac 12\epsi_1\frac 1{n+1}.
\]

First, we consider the function $g:z\mapsto z^{k-s_1}(1-z)^{\eta_l-k+s_1}$.
Integration by parts yields
\eqn\label{hilf_1}
\int_0^1z^{k-s_1}(1-z)^{\eta_l-k+s_1}\de z=\frac {k-s_1}{(\eta_l+1)\eta_l}\binom{\eta_l-1}{k-s_1-1}^{-1}.
\nqe
For $k=c\eta_l+s_1$ the function $g$ reaches its maximum at $\hat z=c$, is increasing on the interval $[0,c]$ and decreasing on $[c,1]$.
Therefore, we have for any $\epsi_3\in(0,c\wedge (1-c))$
\[
\int_0^{c-\epsi_3}z^{c\eta_l}(1-z)^{(1-c)\eta_l}\de z\le \tilde g_c(\epsi_3)^{\eta_l}
\]
and
\[
\int_{c+\epsi_3}^1z^{c\eta_l}(1-z)^{(1-c)\eta_l}\de z\le \tilde g_c(-\epsi_3)^{\eta_l}
\]
where we set $\tilde g_c(\epsi_3):=(c-\epsi_3)^c(1-c+\epsi_3)^{(1-c)}$.
Stirling's formula yields
\[
\binom{\eta_l-1}{c\eta_l-1}^{-1}\sim\sqrt{2\pi c(1-c)}\frac 1c((1-c)^{1-c}c^c)^{\eta_l}\sqrt{\eta_l}=\sqrt{2\pi \frac {1-c}{c}} \tilde g_c(0)^{\eta_l}\sqrt{\eta_l}.
\]
Considering the derivative of $\tilde g_c$ in a neighborhood of $0$, we obtain $\tilde g_c(x)<\tilde g(0)\le1$ for all $x\not=0$ with $|x|$ small enough.
More precisely, for all $c\in[\tilde \alpha,\tilde\beta]$ and $\epsi_3>0$ small enough we have $\tilde g_c(\epsi_3)/\tilde g_c(0)\in(0,C)$ for some constant $C<1$.
Thus, for $\epsi_3>0$ small enough and $l$ large enough we have
\[
\int_0^{c-\epsi_3}z^{c\eta_l}(1-z)^{(1-c)\eta_l}\de z\le\frac 14\binom{\eta_l-1}{c\eta_l-1}^{-1}\frac c{\eta_l+1}
\]
and
\[
\int_{c+\epsi_3}^1z^{c\eta_l}(1-z)^{(1-c)\eta_l}\de z\le\frac 14\binom{\eta_l-1}{c\eta_l-1}^{-1}\frac c{\eta_l+1}.
\]
Together with \eqref{hilf_1}, this implies for some $0<\epsi_3<\epsi_2$, $l$ large enough and $c\in[\tilde\alpha,\tilde\beta]$ with $c\eta_l\in\N$
\[
\int_{c-\epsi_3}^{c+\epsi_3}\binom{\eta_l-1}{c\eta_l-1}z^{c\eta_l}(1-z)^{(1-c)\eta_l}\de z\ge \frac 12\frac c{\eta_l+1}.
\]
We obtain for any $k\in[\tilde\alpha n+s_1,\tilde\beta n+s_1]\cap\N$ and $l\in[e^{-K}n,n]\cap\N$ when $n$ is large enough
\begin{align*}
\lefteqn{\int_0^1\binom{\eta_l-1}{k-s_1-1}z^{k-s_1}(1-z)^{\eta_l-k+s_1}\de P^V(z)}\qquad\\
{}\ge{}&\epsi_1\int_{\alpha}^{\beta}\binom{\eta_l-1}{k-s_1-1}z^{k-s_1}(1-z)^{\eta_l-k+s_1}\de z\\
\ge{}&\frac 12\epsi_1\frac {\tilde\alpha}{\eta_l+1}\\
\ge{}&\frac 12\epsi_1\frac {\tilde\alpha}{n+1}.
\end{align*}
This finally yields \eqref{anfang_tv}:
\begin{align*}
\lefteqn{\sum_{k=\lceil \tilde\alpha n\rceil+s_1}^{\lfloor\tilde\beta n\rfloor+s_1}\kern-1ex \min\left\{\int_0^1\binom{\eta_l-1}{k-s_1-1}z^{k-s_1}(1-z)^{\eta_l-k+s_1}\de P^V(z)\mid l=n,m\right\}}\qquad\\
\ge{}&\frac 12\epsi_1\left(\tilde\beta-\tilde\alpha\right)\tilde\alpha+o(1)\hspace*{7.3cm}\\
>{}&0.
\end{align*}

As in the proof of Lemma \ref{le-5.2} we see that
\begin{align*}
P_x(S_{\tau(y)}-y\le K){}\ge{}&\inf_{x<y}P_x(S_1-S_0\le K)\\
={}&\bar F_y(K)\\
\xrightarrow{y\to-\infty}{}&bE\left[V\1_{\{V\ge e^{-K}\}}\right].
\end{align*}
Since $e^{-K}<1$ the general assumption $F_V(x)<1$ for all $x<1$ implies $bE\left[V\1_{\{V\ge e^{-K}\}}\right]>0$.
This shows the second condition and the proof is finished.
\end{proof}

\section{The internal path length}\label{sec-ipl}
After these preliminaries, we are now able to prove Theorem \ref{th}.
To show Theorem \ref{th} we have to prove that the sequence
\[
H_n:=\frac{E[P_n]-\mu^{-1}n\log n}{n}
\]
converges.
The internal path length $P_n$ suffices a recursive representation \citep[see e.g.][equation (50)]{neininger_rueschendorf_99} from where we get
\[
E[P_n]=\sum_{k=0}^{n-s_0}bP(I_{n,1}=k)E[P_k]+n-s_0.
\]
This recursion formula implies
\[
H_n=\sum_{k=0}^{n-s_0}\nu_n(\{k\})H_k+t(n)-\frac{s_0}{n}H_{n-s_0}
\]
with $t(n)=\frac 1n(n-s_0-\mu^{-1}n\log n+b\mu^{-1}E[I_{n,1}\log I_{n,1}])$ and $\nu_n(\{k\})$ as in the previous section.

From the result about the mean of the depth in \citet{devroye_99} we know $H_n\le C\log n$ for some constant $C>0$.
Therefore, we have for any $\delta_1\in(0,1)$
\[
\frac{s_0}{n}H_{n-s_0}\le Cs_0\frac {\log n}{n}=O\left(\frac 1{n^{\delta_1}}\right).
\]
Furthermore, because of $n=bE[I_{n,1}]+s_0$, we have
\[
t(n)=1-\frac 1{E[V\log V]}E\left[\frac{I_{n,1}}{n}\log\frac{I_{n,1}}{n}\right]+O\left(\frac 1{\sqrt{n}}\right).
\]
The function $x\mapsto x\log x$ is H\"older continuous. Using this and considering the rate of convergence of $E[|\frac {I_{n,1}}{n}-V|]$ in Lemma \ref{le-conv-I} we obtain with Jensen's inequality $t(n)=O(n^{-\,\delta_2})$ for some $\,\delta_2>0$.
Taking all this into account, we get
\eqn\label{rec-hn}
H_n=\sum_{k=0}^{n-s_0}\nu_n(\{k\})H_k+r(n) 
\nqe
where $r(n)=O(n^{-\,\delta})$ for some $\,\delta\in(0,1]$.

\begin{proof}{ of Theorem \ref{th}}
Equation \eqref{rec-hn} shows that the condition of Lemma \ref{le-representation} is fulfilled.
Thus, we start with the representation of
\[
H_n=\frac{E[P_n]-\mu^{-1}n\log n}{n}
\]
from there and show that $(H_n)_{n\in\N}$ is a Cauchy sequence.
Let $\epsi>0$ be given.

For the second term in \eqref{zentral} we keep in mind that we have already shown $|r(n)|\le Cn^{-\delta}$ for some constant $0<C<\infty$ and $\delta\in(0,1]$.
We define $l:\R\to\R^+$ by $l(x):=\exp(\delta x)$.
As in the proof of Theorem 4.2 in \citet{bruhn_96} we obtain with Lemma \ref{le-bruhn-1} for $n_1\in\N$ with $-\log n_1\le a_\ast$
\begin{align*}
\bigg|E_{-\log n}\sum_{t=0}^{\sigma(n_1)-1}r(\exp(-S_t))\bigg|{}\le{}&E_{-\log n}\sum_{t=0}^{\sigma(n_1)-1}Cl(S_t)\\
\le{}&C\hat u(a_\ast)\sum_{n=-\infty}^{\lceil-\log n_1\rceil}\sup_{t\in(n-1,n]}l(t)\\
\le{}&C\hat u(a_\ast)\int_{-\infty}^{\lceil-\log n_1\rceil}l(t+1)\de t.
\end{align*}
Since $\int_{-\infty}^0l(t)\de t<\infty$ we can choose $n_1\in\N$ such that we have for all $n,m>n_1$,
\[
\bigg|E_{-\log n}\bigg[\sum_{t=0}^{\sigma(n_1)-1}r(\exp(-S_t))\bigg]\bigg|\le\frac{\epsi}{4}.
\]

Considering the first term in \eqref{zentral}, we set
\[
a(n_1,n):=E_{-\log n}H_{\exp(-S_{\sigma(n_1)})}
\]
and claim that there exists $n_0$ such that for all $n,m\ge n_0$ we have $|a(n_1,n)-a(n_1,m)|\le\epsi/2$.
It is
\begin{align*}
|a(n_1,n)-a(n_1,m)|{}={}&\left|E_{-\log n}H_{\exp(-S_{\sigma(n_1)})}-E_{-\log m}H_{\exp(-S_{\sigma(n_1)})}\right|\\
={}&\int H_{\exp(-x)} \left|P_{-\log n}^{S_{\sigma(n_1)}}-P_{-\log m}^{S_{\sigma(n_1)}}\right|(\de x)\\
\le{}&d_\mathrm{TV}\left(P_{-\log n}^{S_{\sigma(n_1)}},P_{-\log m}^{S_{\sigma(n_1)}}\right)\sup_{k\in \{0,\ldots,n_1\}}H_{k}.
\end{align*}
Since $n_1$ is fixed we have $\sup_{k\in\{0,\ldots,n_1\}}|H_{k}|\le C<\infty$ with some constant $C\in\R$.
Lemma \ref{le-roesler} in combination with Lemma \ref{le-tv} yields the claim.

Taking everything into account, we obtain for all $n,m\ge \max\{n_0,n_1\}$ 
\begin{align*}
\left|H_n-H_m\right|{}\le{}&\left|a(n_1,n)-a(n_1,m)\right|+\bigg|E_{-\log n}\bigg[\sum_{t=0}^{\sigma(n_1)-1}r(\exp(-S_t))\bigg]\bigg|\\
&{}+\,\bigg|E_{-\log m}\bigg[\sum_{t=0}^{\sigma(n_1)-1}r(\exp(-S_t))\bigg]\bigg|\\
\le{}&\epsi.
\end{align*}
This shows that $(H_n)_{n\in\N}$ is a Cauchy sequence and thus it converges.
\end{proof}

\begin{proof}{ of Corollary \ref{cor-ipl}}
Parts a), c) and d) of Corollary \ref{cor-ipl} are immediate consequences of Theorem \ref{th} and \citet[Theorem 5.1]{neininger_rueschendorf_99}.
To prove part b), we use that convergence with respect to the $\ell_2$-metric implies convergence of the second moments.
Thus, we obtain as consequence of part a) $\lim_{n\to\infty}E[X_n^2]=E[X^2]$.
Using the distributional fixed point equation characterizing $X$, we have
\begin{align*}
E[X^2]{}={}&E\left[\left(\sum_{k=1}^bV_kX^{(k)}+1+\frac 1\mu\sum_{k=1}^bV_k\log V_k\right)^2\right]\\
={}&\kern-0.1ex\sum_{k=1}^b\kern-0.2ex E[V_k^2]E\kern-0.5ex\left[\kern-0.4ex\big(X^{(k)}\big)^2\right]\kern-0.2ex +\kern-0.2ex E\kern-0.6ex\left[ 1+\frac 2\mu\sum_{k=1}^bV_k\log V_k+\frac 1{\mu^2}\kern-0.4ex\left(\sum_{k=1}^bV_k\log V_k\right)^2\right]
\end{align*}
where we used the independence between $(V_1,\ldots,V_b)$ and $(X^{(1)},\ldots,X^{(b)})$ as well as the fact that $E[X^{(k)}]=0$ for all $k$.
Since $\mu=-bE[V_i\log V_i]$ for all $i=1,\ldots,b$ and $E[X^2]=E[(X^{(k)})^2]=:\sigma^2$ the claim follows.
\end{proof}

\section{The Wiener index}\label{sec-wi}
We now turn to the investigation of the Wiener index.
To handle the Wiener index similarly to the internal path length, we first need a recursion formula for it.
The Wiener index is the sum of the distances between all unordered pairs of balls in the tree.
Let $\Delta_{k,l}$ denote the distance between the balls $k$ and $l$.
Then we have
\[
W_n=\sum_{k<l}\Delta_{k,l}.
\]
Subdividing the sum into the sum for all pairs, where both balls are located in the same subtree, and the sum for all other pairs, we obtain
\[
W_n=\sum_{i=1}^bW_{I_{n,i}}^{(i)}+\sum_{i<j}\sum_{l\in T_{n,j}}\sum_{k\in T_{n,i}}\Delta_{k,l}
\]
where $W^{(i)}_{I_{n,i}}$ denotes the Wiener index of the $i$-th subtree $T_{n,i}$ being of size $I_{n,i}$.
For $k\in T_{n,i}$ and $l\in T_{n,j}$ with $i\not=j$ it is $\Delta_{k,l}=D_k^{(i)}+1+D_l^{(j)}+1$ where $D_k^{(i)}$ is the depth of the ball $k$ with respect to the subtree $T_{n,i}$.
By symmetry of $\Delta_{k,l}$ we can sum up only the first part $D_k^{(i)}+1$ but for all ordered pairs of balls and we obtain
\[
\sum_{i<j}\sum_{l\in T_{n,j}}\sum_{k\in T_{n,i}}\Delta_{k,l}=\sum_{i\not=j}\sum_{l\in T_{n,j}}\sum_{k\in T_{n,i}}(D_k^{(i)}+1).
\]
The summation over $k\in T_{n,i}$ yields
\[
\sum_{i\not=j}\sum_{l\in T_{n,j}}\sum_{k\in T_{n,i}}(D_k^{(i)}+1)=\sum_{i\not=j}\sum_{l\in T_{n,j}}(P_{I_{n,i}}^{(i)}+I_{n,i})
\]
where $P^{(i)}_{I_{n,i}}$ denotes the internal path length of the $i$-th subtree $T_{n,i}$.
Since there are all together $n-I_{n,i}$ balls not lying in $T_{n,i}$, we finally obtain the recursion formula for the Wiener index of the random split tree with $n$ balls: 
\eqn\label{rekursion-wiener-index}
W_n=\sum_{i=1}^b\left[W_{I_{n,i}}^{(i)}+(n-I_{n,i})P_{I_{n,i}}^{(i)}+I_{n,i}(n-I_{n,i})\right].
\nqe

\begin{proof}{ of Theorem \ref{th-wiener}}
Starting from equation \eqref{rekursion-wiener-index} and taking the expectation yields
\eqn\label{exp-wi-1}
E[W_n]=b\sum_{k=0}^{n-s_0}P(I_{n,1}=k)\left(E[W_k]+(n-k)E[P_k]+nk-k^2\right)
\nqe
because all subtrees are identically distributed.
Theorem \ref{th} implies $E[P_k]=\frac 1\mu k\log k+c_pk+o(k)$.
Substituting this in \eqref{exp-wi-1} yields with $E[I_{n,1}]=n/b+o(n)$,
\begin{align}\label{exp-wi-2}
E[W_n]{}={}&b\sum_{k=0}^{n-s_0}P(I_{n,1}=k)E[W_k]+\frac 1\mu b\left(nE[I_{n,1}\log I_{n,1}]-E[I_{n,1}^2\log I_{n,1}]\right)\notag\\
&+\,(c_p+1)n^2-(c_p+1)bE[I_{n,1}^2]+o(n^2).
\end{align}
Substituting the results from Lemma \ref{le-exp-I} in \eqref{exp-wi-2} provides
\begin{align}\label{exp-wi-3}
E[W_n]{}={}&\sum_{k=0}^{n-s_0}bP(I_{n,1}=k)E[W_k]+\frac 1\mu(1-bE[V^2])n^2\log n\notag\\*
&-\,\left(\frac b\mu E[V^2\log V]+bE[V^2]-c_p(1-bE[V^2])\right)n^2+o(n^2).
\end{align}

We set
\[
H_n:=\frac{E[W_n]-\frac 1\mu n^2\log n}{n}.
\]
To prove Theorem \ref{th-wiener} it suffices to show that for each $\epsi>0$ there exists a constant $c\in\R$ and $n_0\in\N$ such that for all $n\ge n_0$
\[
\frac{H_n}{n}\in(c-\epsi,c+\epsi).
\]
So, let $\epsi>0$ be given.
Substituting $H_n$ in \eqref{exp-wi-3} and using Lemma \ref{le-exp-I} yields
\[
H_n=\sum_{k=0}^{n-s_0}\nu_n(\{k\})H_k+r(n)
\]
with
\[
r(n):=-\left(bE[V^2]-c_p(1-bE[V^2])\right)n+o(n).
\]
We set $\tilde d:=-bE[V^2]+c_p(1-bE[V^2])$.
As in the proof of Theorem \ref{th} the conditions of Lemma \ref{le-representation} are fulfilled and we have the representation
\eqn\label{rep-wi}
H_n=E_{-\log n}H_{\exp(-S_{\sigma(n_1)})}+E_{-\log n}\sum_{t=0}^{\sigma(n_1)-1}r(\exp(-S_t)).
\nqe
We start again with the second term and split it in the following way
\begin{align*}
E_{-\log n}\sum_{t=0}^{\sigma(n_1)-1}r(\exp(-S_t)){}={}&E_{-\log n}\sum_{t=0}^{\tau(-\log n+d)}r(\exp(-S_t))\\
&{}+\,E_{-\log n}\sum_{t=\tau(-\log n+d)+1}^{\sigma(n_1)-1}r(\exp(-S_t)).
\end{align*}
For the second summand we obtain by Lemma \ref{le-bruhn-1} with $l(x):=\tilde d\exp(-x)$ and $n_1$ large enough such that $-\log n_1\le a_\ast$
\begin{align*}
0\le \left|E_{-\log n}\sum_{t=\tau(-\log n+d)+1}^{\sigma(n_1)-1}r(\exp(-S_t))\right|{}\le{}&\hat u(a_\ast)\sum_{n=\lfloor -\log n+d \rfloor}^{\lceil-\log n_1\rceil}\sup_{t\in(n-1,n]}|\tilde d|e^{-t}\\
\le{}& C\int_{-\log n+d-3}^{-\log n_1}e^{-x}\,\de x\\
\le{}&C ne^{-d+3}
\end{align*}
with some constant $C$.
We choose $d$ large enough, such that $Ce^{-d+3}<\epsi/3$.
For this $d$ Corollary \ref{co-hauptteil} yields $\hat n_0\in\N$ such that for all $n\ge \hat n_0$
\eqn\label{eq-haupt}
\frac 1nE_{-\log n}\sum_{t=0}^{\tau(-\log n+d)}r(\exp(-S_t))\in\left(c-\frac\epsi 3,c+\frac\epsi 3\right)
\nqe
for some constant $c$.
As in the proof of Theorem \ref{th} the first summand in \eqref{rep-wi} is a Cauchy sequence, i.e.\ there exists $\tilde n_0\in\N$ such that for all $n\ge \tilde n_0$ we have
\[
\left|\frac 1n E_{-\log n}[H_{\exp(S_{\sigma(n_1)})}]\right|<\frac\epsi 3.
\]
Altogether, we have seen that for $n_1\in\N$ with $-\log n_1\le a_\ast$ there exists $n_0\in\N$ such that for all $n\ge n_0$ we have
\begin{align*}
\frac {H_n}{n}{}={}&\frac 1nE_{-\log n}H_{\exp(-S_{\sigma(n_1)})}+\frac 1nE_{-\log n}\sum_{t=0}^{\sigma(n_1)-1}r(\exp(-S_t))\\
\in{}&(c-\epsi,c+\epsi)
\end{align*}
with the constant $c$ in \eqref{eq-haupt}.
Thus, the claim follows.
\end{proof}

\begin{proof}{ of Theorem \ref{th-limit-wiener}}
We define
\begin{align*}
w_n{}:={}&E[W_n]=\frac 1\mu n^2\log n+c_wn^2+o(n^2),\\
p_n{}:={}&E[P_n]=\frac 1\mu n\log n+c_pn+o(n)
\intertext{and}
X_n{}:={}&\left(\frac{W_n-w_n}{n^2},\frac{P_n-p_n}{n}\right)^T.
\end{align*}
For $i\in\{1,\ldots,b\}$ let $X_n^{(i)}$ be an independent copy of $X_n$.
Since the subtrees of the random split tree are independent conditioned upon there sizes, we obtain from \eqref{rekursion-wiener-index} for the standardized vector $X_n$ the following recursion formula
\[
X_n{}\eqd{}\sum_{i=1}^b A_i^{(n)}X_{I_{n,i}}^{(i)}+b^{(n)}
\]
with
\[
A_i^{(n)} :=\begin{bmatrix} \frac 1{n^2}&0\\0&\frac 1n\end{bmatrix}\begin{bmatrix} 1&n-I_{n,i}\\0&1\end{bmatrix}\begin{bmatrix} I_{n,i}^2&0\\0&I_{n,i}\end{bmatrix}=\begin{bmatrix} \frac{I_{n,i}^2}{n^2}&\frac{I_{n,i}(n-I_{n,i})}{n^2}\\[1.2ex]0&\frac{I_{n,i}}{n}\end{bmatrix}
\]
and $b^{(n)}=\left(b_1^{(n)},b_2^{(n)}\right)^T$ where
\begin{align*}
b_1^{(n)}{}={}\frac 1{n^2}&{}\left\{{}\sum_{i=1}^b I_{n,i}\left(n-I_{n,i}\right)-\frac 1\mu n^2\log n-c_wn^2+o(n^2)\right.\\*
&\left.{}+{}\,\sum_{i=1}^bw_{I_{n,i}}+n\sum_{i=1}^b p_{I_{n,i}}-\sum_{i=1}^bI_{n,i}\,p_{I_{n,i}}\right\}
\end{align*}
and
\[
b_2^{(n)}:=1-\frac 1\mu\log n-c_p+o(1)+\frac 1n \sum_{i=1}^bp_{I_{n,i}}+o(1).
\]

Using $\sum_{i=1}^bI_{n,i}=n-s_0$ it follows
\begin{align*}
n\sum_{i=1}^b p_{I_{n,i}}-\frac1\mu n^2\log n{}={}&n\frac 1\mu \sum_{i=1}^bI_{n,i}\log \frac{I_{n,i}}{n}+c_pn(n-s_0)+o(n^2)
\end{align*}
and
\begin{align*}
\sum_{i=1}^bw_{I_{n,i}}-\sum_{i=1}^bI_{n,i}\,p_{I_{n,i}}{}={}&(c_w-c_p)\sum_{i=1}^bI_{n,i}^2+o(n^2).
\end{align*}
This yields with $I_{n,i}=o(n^2)$
\begin{align}\label{eq:2.3}
b_1^{(n)}{}={}&\frac 1\mu \sum_{i=1}^b\frac{I_{n,i}}{n}\log \frac{I_{n,i}}{n}+(1+c_p-c_w)\left(1-\sum_{i=1}^b\frac{I_{n,i}^2}{n^2}\right)+o(1).
\end{align}

By similar arguments we have 
\begin{equation}\label{eq:2.4}
b_2^{(n)}=\frac1\mu\sum_{i=1}^b \frac{I_{n,i}}{n}\log \frac{I_{n,i}}{n}+1+o(1).
\end{equation}

In order  to use the contraction method as in \citet[Theorem 4.1]{neininger_01} it suffices to show that for $n\to\infty$
\begin{equation}\label{a}
\left(\vect{A^{(n)}}{1}{b}, b^{(n)}\right) \stackrel{\ell_2}{\longrightarrow} \left(\vect{A^\ast}{1}{b},b^\ast\right),
\end{equation}
\begin{equation}\label{b}
E\left[\1_{\{I_{n,i}\le l\}\cup\{I_{n,i}=n\}}\left\|(A_i^{(n)})^TA_i^{(n)}\right\|_{\mathrm{op}}\right]\to 0
\end{equation}
for all $l\in\N$ and
\begin{equation}\label{c}
\sum_{i=1}^bE\left\|(A_i^\ast)^TA_i^\ast\right\|_{\mathrm{op}}<1
\end{equation}
where $\|\cdot\|_{\mathrm{op}}$ is the operator norm.

By Lemma \ref{le-conv-I} we know that $I_n/n$ converges in probability to $V:=(\vect{V}{1}{b})$, which is the splitting vector.
By  equations  \eqref{eq:2.3} and \eqref{eq:2.4} we have $b^{(n)}\to b^\ast$ in probability as $n\to\infty$ with
\[
b^\ast=\frac 1\mu \sum_{i=1}^b V_i\log V_i\begin{pmatrix} 1\\1\end{pmatrix}+\begin{pmatrix}( 1+c_p-c_w)\left(1-\sum_{i=1}^bV_i^2\right)\\1\end{pmatrix}.
\]
By the boundedness of the function $x\mapsto x\log x$ on $[0,1]$ and as $I_{n,i}/n\in[0,1]$ there exists a constant $C$ such that
\begin{align*}
\left|b_1^{(n)}\right|{}\le{}& C&\text{and}&&\left|b_2^{(n)}\right|{}\le{}& C.
\end{align*}
Thus, we get the uniform integrability of $(b_1^{(n)})^2$ and $(b_2^{(n)})^2$ and consequently the convergence of $b^{(n)}$ with respect to the $\ell_2$-metric.
Similar arguments yield the convergence of $A_i^{(n)}$ with respect to the $\ell_2$-metric to
\[
A_i^\ast=\begin{bmatrix} V_i^2&V_i(1-V_i)\\0&V_i\end{bmatrix}.
\]
This shows condition \eqref{a}.

Condition \eqref{b} follows from the deterministic boundedness of $\|A_i^{(n)}\|_{\mathrm{op}}$ and from the fact that
\begin{align*}
\lefteqn{\lim_{n\to\infty}P\left(\{I_{n,i}\le l\}\cup\{I_{n,i}=n\}\right)}\qquad\\
{}={}&\lim_{n\to\infty}\int_0^1P(\mathrm{Bin}(\eta_n,x)\le l-s_1)\de P^V(x)\\
\le&{}\lim_{n\to\infty}P\left(V\le\left((l-s_1)/\eta_n\right)^{\frac13}\right)\\
&{}+\lim_{n\to\infty}\int_{\left(\frac{l-s_1}{\eta_n}\right)^{\frac 13}}^1\exp\left(-\frac 14\eta_n^{\frac 13}(l-s_1)^{\frac 23}\left(1-\left(\frac{l-s_1}{\eta_n}\right)^{\frac 23}\right)^2\right)\de P^V(x)\\
={}&0
\end{align*}
where we used Bernstein's inequality.

It remains to show \eqref{c}.
Solving the characteristic equation for the matrix $(A_i^\ast)^TA_i^\ast$ we obtain that its eigenvalue $\lambda(V_i)$ being larger in absolute value is given by
\[
\lambda(V_i)=V_i^2\left(1-V_i+V_i^2+(1-V_i)\sqrt{V_i^2+1}\right).
\]
Elementary calculations show $x>x^2(1-x+x^2+(1-x)\sqrt{x^2+1})$ for all $x\in(0,1)$.
Thus, we have $E[\lambda(V_i)]<E[V_i]=1/b$ because it is $P(V_i\in\{0,1\})=0$.
This finally implies
\[
E\left[\sum_{i=1}^b\left\|(A_i^\ast)^TA_i^\ast\right\|_{\mathrm{op}}\right]= E\left[\sum_{i=1}^b\lambda(V_i)\right]<1.
\]
The claim for the asymptotic behavior of the variance of $W_n$ follows directly from the first part, since convergence with respect to the $\ell_2$-metric implies convergence of the second moments.
\end{proof}

\bibliographystyle{plainnat}

\appendix
\section{Proof of Lemma \ref{le-roesler}}\label{app-a}
We give the essential parts of \citet{roesler_91} which prove Lemma \ref{le-roesler}.
\begin{proof}{ of Lemma \ref{le-roesler}}
Let $a\in\R_-$.
We use the notation 
\[
\Delta(a):=\lim_{x_0\to-\infty}\sup_{x,y\le x_0}d_\mathrm{TV}\left(P_x^{S_{\tau(a)}},P_y^{S_{\tau(a)}}\right).
\]
Since the function 
\[
x_0\quad\mapsto\quad\sup_{x,y\le x_0}d_\mathrm{TV}\left(P_x^{S_{\tau(a)}},P_y^{S_{\tau(a)}}\right)
\]
is increasing and non-negative, the limit for $x_0\to-\infty$ exists.
We will show that $\Delta(a)\le (1-\tilde\epsi)\Delta(a)+\delta$ for some $\tilde\epsi>0$ and all $\delta>0$.
Then the claim follows.

Let $\delta>0$ be an arbitrary number.
Since the process $S$ fulfills the integrability condition and $S_{\tau(y)}-y\le S_{\tau(y)}-S_{\tau(y)-1}$, there exists $x_1\in\R_-$ such that for all $x<y<x_1$
\[
E_x[S_{\tau(y)}-y]\;\le\;\int z\;\de\bar F_y(z)\;\le\; C<\infty
\]
for some constant $C$.
Thus, there exists $K_1\ge K$ such that for all $y<x_1$
\eqn\label{term_3}
P_x(S_{\tau(y)}-y> K_1)\le\frac{E_x[S_{\tau(y)}-y]}{K_1}\le \frac{\delta}{4}.
\nqe

Furthermore, we have for this $K_1$
\eqn\label{term_2}
\sup_{y+K\le z\le y+K_1}d_\mathrm{TV}\left(P_z^{S_{\tau(a)}},P_y^{S_{\tau(a)}}\right)\le\sup_{u,v\le y+K_1}d_\mathrm{TV}\left(P_u^{S_{\tau(a)}},P_v^{S_{\tau(a)}}\right).
\nqe

The distribution of the Markov chain $S$ on the state space $\cE$ is given by the kernel
\[
\kappa(x,A):=P(S_{t+1}\in A\mid S_t=x)\qquad\text{for all $t\in\N_0$ and $A\subset\cE$.}
\]
Let $S^{(a)}$ be the process $S$ stopped at the moment when it exceeds $a\in\cE$.
The kernel $\kappa_a$ corresponding to the process $S^{(a)}$ is then given by $\kappa_a(x,A)=\kappa(x,A)$ for $x\le a$ and $\kappa_a(x,A):=\1_A(x)$ for $x>a$ and for all $A\subset\cE$.

Let $D:=\{(x,x)\mid x\in\cE\}$ denote the diagonal in $\cE^2$.
We define a kernel $\varrho$ on $\cE^2$ by the so called Wasserstein coupling \citep[see e.g.][]{griffeath_74}, i.e. for $(x,y),(u,v)\in \cE^2$ it is
\[
\varrho((x,y),(u,v)):=
\begin{cases}
\min\{\kappa_a(x,u),\kappa_a(y,v)\},&\text{if $u=v$}\\
\frac {(\kappa_a(x,u)-\kappa_a(y,u))^+(\kappa_a(y,v)-\kappa_a(x,v))^+}{1-\alpha(x,y)},&\text{if $u\not= v$}
\end{cases}
\]
where $\alpha(x,y):=\sum_{z\in\cE}\min\left\{\kappa_a(x,z),\kappa_a(y,z)\right\}$ and $r^+=\max\{r,0\}$ denotes the positive part of a real number $r$.
Then the following properties hold:
\begin{enumerate}
\item $\varrho((x,y),A\times\cE)=\kappa_a(x,A)$ and $\varrho((x,y),\cE\times A)=\kappa_a(y,A)$ for all $x,y\in\cE$ and $A\subset\cE$
\item $\varrho((x,x),D)=1$ for all $x\in\cE$ and
\item $\varrho((x,y),D^c)\le 1-\epsi$ for all $x,y\in\cE$ with $|x-y|\le K$ and $x,y<x_0$.
\end{enumerate}
The property c) follows from the assumption \eqref{cond-tv} and the fact that
\begin{align*}
d_\mathrm{TV}\left(P_x^{S_1},P_y^{S_1}\right){}={}&\sum_{z\in E}\left|\kappa_a(x,z)-\kappa_a(y,z)\right|\\
={}&2\left(1-\sum_{z\in E}\min\{\kappa_a(x,z),\kappa_a(y,z)\}\right).
\end{align*}

For $(x,y)\in\cE^2$ let $Z^{(x,y)}=(U^{(x,y)},V^{(x,y)})$ be the Markov chain generated by the kernel $\varrho$ which starts in $(x,y)$. We define the stopping time 
\[
\theta(a):=\inf\{t\mid Z^{(x,y)}_t\in(a,\infty)\times(a,\infty)\}.
\]

Using this coupling we obtain for any $K_2>0$ and $z,y<a$
\begin{align}\label{term_1}
\lefteqn{d_\mathrm{TV}\left(P_z^{S_{\tau(a)}},P_y^{S_{\tau(a)}}\right)}\qquad\notag\\
={}&\sum_{w\in\cE}\left|P_z(S_{\tau(a)}=w)-P_y(S_{\tau(a)}=w)\right|\notag\\
={}&\sum_{w\in\cE}\bigg|P\left(U^{(z,y)}_{\theta(a)}=w\right)-P\left(V^{(z,y)}_{\theta(a)}=w\right)\bigg|\notag\\
={}&\sum_{(u,v)\in\cE^2}\sum_{w\in\cE}P\left(Z^{(z,y)}_1=(u,v)\right)\notag\\
&\hspace*{0.5cm}\times\bigg|\underbrace{P\left(U^{(z,y)}_{\theta(a)}=w\mid Z^{(z,y)}_1=(u,v)\right)}_{=P_u(S_{\tau(a)}=w)}-\underbrace{P\left(V^{(z,y)}_{\theta(a)}=w\mid Z^{(z,y)}_1=(u,v)\right)}_{=P_v(S_{\tau(a)}=w)}\bigg|\notag\\[1.2ex]
\le{}&\kern-1ex\sup_{u,v\le y+K_2}\kern-1ex d_\mathrm{TV}\left(P_u^{S_{\tau(a)}},P_v^{S_{\tau(a)}}\right)\varrho((z,y),D^c)+2P\left(Z^{(z,y)}_1\notin(-\infty,y+K_2]^2\right).
\end{align}
In the last step we used that $P_u(S_{\tau(a)}=w)-P_v(S_{\tau(a)}=w)=0$ for $u=v$.
As seen in equation \eqref{term_3} and using property a) of the coupling, there exists by the integrability condition $K_2>K$ such that for all $y<x_1-K$ and $y<z<y+K$
\begin{align}\label{term_1_c}
P\left(Z^{(z,y)}_1\notin(-\infty,y+K_2]^2\right){}\le{}& \kappa_a\left(z,(-\infty,y+K_2]^c\right)+\kappa_a\left(y,(-\infty,y+K_2]^c\right)\notag\\
{}\le{}&\frac{\delta}{4}.
\end{align}

After these preliminaries, we now turn to $\Delta(a)$.
It is for $x<y<a-K$
\begin{align*}
d_\mathrm{TV}\left(P_x^{S_{\tau(a)}},P_y^{S_{\tau(a)}}\right)
={}&\int d_\mathrm{TV}\left(P_z^{S_{\tau(a)}},P_y^{S_{\tau(a)}}\right)\de P_x^{S_{\tau(y)}}(z)\\
={}&\int_{[y,y+K]} d_\mathrm{TV}\left(P_z^{S_{\tau(a)}},P_y^{S_{\tau(a)}}\right)\de P_x^{S_{\tau(y)}}(z)\\
&{}+\,\int_{(y+K,y+K_1]} d_\mathrm{TV}\left(P_z^{S_{\tau(a)}},P_y^{S_{\tau(a)}}\right)\de P_x^{S_{\tau(y)}}(z)\\
&{}+\,\int_{(y+K_1,\infty)} d_\mathrm{TV}\left(P_z^{S_{\tau(a)}},P_y^{S_{\tau(a)}}\right)\de P_x^{S_{\tau(y)}}(z)\\
\le{}&P_x(S_{\tau(y)}-y\le K)\sup_{y\le z\le y+K}d_\mathrm{TV}\left(P_z^{S_{\tau(a)}},P_y^{S_{\tau(a)}}\right)\\
&{}+\,P_x(S_{\tau(y)}-y> K)\sup_{y+K\le z\le y+K_1}d_\mathrm{TV}\left(P_z^{S_{\tau(a)}},P_y^{S_{\tau(a)}}\right)\\
&{}+\,2P_x(S_{\tau(y)}-y\ge K_1).
\end{align*}
With the results in \eqref{term_3}, \eqref{term_2}, \eqref{term_1} and \eqref{term_1_c} as well as property c) of the kernel $\varrho$ this finally yields
\begin{align*}
\Delta(a){}\le{}&\lim_{x_0\to-\infty}\sup_{x<y\le x_0}\left[P_x(S_{\tau(y)}-y\le K)\kern-.4ex\sup_{u,v\le y+K_2}d_\mathrm{TV}\left(P_u^{S_{\tau(a)}},P_v^{S_{\tau(a)}}\right)\left(1-\epsi\right)\right.\\
&+\,P_x(S_{\tau(y)}-y> K)\sup_{u,v\le y+K_1}d_\mathrm{TV}\left(P_z^{S_{\tau(a)}},P_y^{S_{\tau(a)}}\right)\\
&\left.+\,2P\left(Z^{(z,y)}_1\notin(-\infty,y+K_2]^2\right)\right]+\frac{\delta}{2}\\
\le{}&\Delta(a)\lim_{x_0\to-\infty}\sup_{x<y\le x_0}\left(1-\epsi P_x(S_{\tau(y)}-y\le K)\right)+\delta\\
\le{}&\left(1-\tilde\epsi\right)\Delta(a)+\delta
\end{align*}
where $\tilde\epsi=\epsi\lim_{x_0\to-\infty}\inf_{x<y\le x_0} P_x(S_{\tau(y)}-y\le K)>0$.
\end{proof}

\section{Proofs from \citet{bruhn_96}}\label{app-b}
\begin{proof}{ of Lemma \ref{le-representation}}
For $n\le n_1$ the claim follows immediately since $\sigma(n_1)=0$.
For $n>n_1$ equation \eqref{zentral} follows by induction on $n$.
It is with $H_{1/e}:=H_0$
\begin{align*}
H_{n+1}{}={}&\sum_{k=0}^n\nu_{n+1}(\{k\})H_k+r(n+1)\\
={}&\sum_{k=1}^nP_{-\log (n+1)}(S_1=-\log k)\,E_{-\log k}\kern-1ex\left[H_{\exp(-S_{\sigma(n_1)})}+\kern-2ex\sum_{t=0}^{\sigma(n_1)-1}\kern-1ex r(\exp(-S_t))\kern-.5ex\right]\\
&+E_{-\log(n+1)}[r(\exp(-S_0))]+P_{-\log(n+1)}(S_1=1)E_1[H_{\exp(-S_{\sigma(n_1)})}]\\
={}&E_{-\log(n+1)}H_{\exp(-S_{\sigma(n_1)})}+E_{-\log(n+1)}\left[\sum_{t=0}^{\sigma(n_1)-1}r(\exp(-S_t))\right]
\end{align*}
where we use the Kolmogorov-Chapman equation for Markov chains in the last step.
\end{proof}

\begin{proof}{ of Lemma \ref{le-bruhn-1}}
We use the notation from Section \ref{sec-bruhn} and define for $x\in\R_-$ the function $u_x$ by
\[
u_x(a):=E_x[|\{t:S_t\in(a,a+1]\}|].
\]
By the monotone convergence theorem we have $\lim_{a\to-\infty}E[\underline Y_t^{(a)}]=E[\tilde Y_t]>0$.
Thus, there exists $a_\ast\in\R$ such that for all $a<a_\ast$ it is $E[\underline Y_t^{(a)}]>0$.
For $x,n,a<a_\ast$ and $k\in\N$ it holds
\begin{align*}
P_x(|\{t:S_t\in(n-1,n]\}|\ge k){}={}&\int_{(n-1,n]}P_y(S_{k-1}\le n)\;\de P_x^{S_{\tau(n-1)}}(y)\\
\le{}&\int_{(n-1,n]}P_y(\underline S^{(a)}_{k-1}\le n)\;\de P_x^{S_{\tau(n-1)}}(y)\\
\le{}&\int_{(n-1,n]}P_0(\underline S^{(a)}_{k-1}\le 1)\;\de P_x^{S_{\tau(n-1)}}(y)\\
\le{}&P_0(\underline S^{(a)}_{k-1}\le 1)\\
={}&P_0(|\{t:\underline S^{(a)}_t\in[0,1]\}|\ge k).
\end{align*}
Thus, we have
\begin{align*}
u_x(n-1){}\le{}&\sum_{k=1}^\infty P_0(|\{t:\underline S^{(a)}_t\in[0,1]\}|\ge k)\\
={}&E_0[|\{ t:\underline S^{(a)}_t\in[0,1]\}|]\\
=:{}&\hat u(a).
\end{align*}
Since it is $E[\underline{Y}^{(a)}_t]>0$ the elementary renewal theorem \citep[see e.g.][Section II.4]{gut_88} provides $\hat u(a)<\infty$.
Furthermore, the function $a\mapsto\hat u(a)$ is decreasing as $a\to-\infty$, i.e.\ $\hat u(a)\le\hat u(a_\ast)$ for all $a<a_\ast$.

So we finally obtain for a function $l:\R\to\R_+$, $y,z\in\R$ and $x\in \cE$ with $x<y<z<a_\ast$
\begin{align*}
E_x\left[\sum_{t=\tau(y)}^{\tau(z)-1}l(S_t)\right]{}\le{}&\sum_{n=\lceil y\rceil}^{\lceil z\rceil}u_x(n-1)\sup_{t\in(n-1,n]}l(t)\\
\le{}&\hat u(a_\ast)\sum_{n=\lceil y\rceil}^{\lceil z\rceil}\sup_{t\in(n-1,n]}l(t).
\end{align*}
\end{proof}
\end{document}